\newtheorem{theorem}{Theorem}
\newtheorem{lemma}[theorem]{Lemma}
\newtheorem{corollary}[theorem]{Corollary}
\newtheorem{conjecture}[theorem]{Conjecture}
{
\theoremstyle{definition}
\newtheorem{definition}[theorem]{Definition}
\newtheorem{example}[theorem]{Example}

}
{
\theoremstyle{remark}

}
\DeclareMathOperator{\st}{st}
\DeclareMathOperator{\maj}{maj}
\DeclareMathOperator{\Av}{Av}
\DeclareMathOperator{\Des}{Des}
\DeclareMathOperator{\Destop}{Destop}
\DeclareMathOperator{\Desbot}{Desbot}
\DeclareMathOperator{\Asc}{Asc}
\DeclareMathOperator{\Asctop}{Asctop}
\DeclareMathOperator{\Ascbot}{Ascbot}
\DeclareMathOperator{\Peak}{Peak}
\DeclareMathOperator{\Val}{Val}
\DeclareMathOperator{\LRmax}{LRmax}
\DeclareMathOperator{\LRmin}{LRmin}
\DeclareMathOperator{\RLmax}{RLmax}
\DeclareMathOperator{\RLmin}{RLmin}
\DeclareMathOperator{\sg}{sg}
\DeclareMathOperator{\height}{height}
\DeclareMathOperator{\lac}{{}^{\underbracket{\hspace{1em}}}}
\author{Alexander Burstein}
\title[Descent tops and bottoms in restricted permutations]{Distribution of sets of descent tops and descent bottoms on restricted permutations}
\affiliation{
Howard University, Washington, DC, USA
}
\keywords{permutation pattern, permutation statistic, descent, descent top, descent bottom}
\begin{document}

\publicationdata{vol. 26:1, Permutation Patterns 2023}{2025}{8}{10.46298/dmtcs.12636}{2023-12-02; 2023-12-02; 2024-12-25}{2025-01-07}

\maketitle

\begin{abstract}
~\\
In this note, we prove some and conjecture other results regarding the distribution of descent top and descent bottom sets on some pattern-avoiding permutations. In particular, for 3-letter patterns, we show bijectively that the set of descent tops and the set of descent bottoms are jointly equidistributed on the avoiders of 231 and 312. We also conjecture similar equidistributions for 4-letter patterns, in particular, that the set of descent tops and the set of descent bottoms are jointly equidistributed on the avoiders of 3142, 3241, 4132. This conjecture and several others made in this paper have now been proved by Zhou, Zang, and Yan (2024).
\end{abstract}

\section{Preliminaries} \label{sec:prelim}

We begin with a few basic definitions related to permutations and permutation patterns, where we largely follow Bevan~\cite{Bevan15}. 

For integers $i$ and $j$, let $[i,j]=\{m\in\mathbb{Z}\mid i\le m\le j\}$, and let $[j]=[1,j]$. For a sequence $\sigma$, let $|\sigma|$ be the length (or size) of $\sigma$. We say that sequences $\sigma$ and $\tau$ are \emph{order-isomorphic} if $|\sigma|=|\tau|$ and, for all $i,j\in[1,|\sigma|]$, $\sigma(i)<\sigma(j)$ if and only if $\tau(i)<\tau(j)$. Given permutations $\sigma$ and $\pi$, we say that $\sigma$ \emph{contains} (an \emph{occurrence}, or an \emph{instance}) of \emph{pattern} $\pi$ if $\pi$ is order-isomorphic to a (not necessarily consecutive) subsequence of $\sigma$, denoted $\pi\preccurlyeq\sigma$. Otherwise, we say that $\sigma$ \emph{avoids} $\pi$. Let $S_n$ be the set of all permutations of length $n$, then we say that $\Av_n(\pi)=\{\sigma\in S_n\mid \pi\not\preccurlyeq\sigma\}$ is the set of \emph{$n$-avoiders} of $\pi$, and $\Av(\pi)=\cup_{n=0}^{\infty}\Av_n(\pi)$ is the set of \emph{avoiders} of $\pi$.

We say that $[i,j]$ is an \emph{interval} of a permutation $\sigma\in S_n$ if $i,j\in[n]$, $i\le j$, and $\{\sigma(m)\mid m\in[i,j]\}=[k,l]$ for some $k,l\in[n]$ (so that $l-k=j-i$). Given a permutation $\sigma\in S_n$ and nonempty permutations $\tau_1,\dots,\tau_n$, we say that the \emph{inflation} $\sigma[\tau_1,\dots,\tau_n]$ of $\sigma$ by $\tau_1,\dots,\tau_n$ is the permutation obtained by replacing each entry $\sigma(i)$, $i\in[n]$, with an interval order-isomorphic to $\tau_i$. For example, $231=21[12,1]$ and $312=21[1,12]$. We also call $\sigma\oplus\tau=12[\sigma,\tau]$ the \emph{direct sum} of $\sigma$ and $\tau$, and $\sigma\ominus\tau=21[\sigma,\tau]$ the \emph{skew-sum} of $\sigma$ and $\tau$ (see Figure~\ref{fig:sum-skewsum}). For example, $231=12\ominus 1=(1\oplus 1)\ominus 1$.

\begin{figure}[!ht]
\begin{center}
\begin{tikzpicture}[scale=0.3]
\begin{scope}[xshift=0, yshift=0, local bounding box = sum]

\draw (0,2) [left] node {$\sigma\oplus\tau=$};
\draw [very thick] ++(0,0) +(0,0) rectangle +(2,2);
\draw ++(0,0) +(1,1) node {$\sigma$};
\draw [very thick] ++(2,2) +(0,0) rectangle +(2,2);
\draw ++(2,2) +(1,1) node {$\tau$};

\end{scope}

\begin{scope}[xshift=15cm, yshift=0, local bounding box = skewsum]

\draw (0,2) [left] node {$\sigma\ominus\tau=$};
\draw [very thick] ++(0,2) +(0,0) rectangle +(2,2);
\draw ++(0,2) +(1,1) node {$\sigma$};
\draw [very thick] ++(2,0) +(0,0) rectangle +(2,2);
\draw ++(2,0) +(1,1) node {$\tau$};

\end{scope}
\end{tikzpicture}
\end{center}
\caption{Direct sum $\sigma\oplus\tau$ and skew-sum $\sigma\ominus\tau$ of permutations $\sigma$ and $\tau$.} \label{fig:sum-skewsum}
\end{figure}

Given a permutation $\sigma$, and a position $i$ such that $\sigma(i)>\sigma(i+1)$, we call $i$ a \emph{descent} of $\sigma$, $\sigma(i)$ a \emph{descent top} of $\sigma$, and $\sigma(i+1)$ a \emph{descent bottom} of $\sigma$. Likewise, if $\sigma(i)<\sigma(i+1)$, then we call $i$ an \emph{ascent} of $\sigma$, $\sigma(i)$ an \emph{ascent bottom} of $\sigma$, and $\sigma(i+1)$ an \emph{ascent top} of $\sigma$. Sometimes, when the context is clear, we may also refer to the ordered pair $\sigma(i)\sigma(i+1)$ as a descent (or ascent) and refer to $i$ as a descent (or ascent) position. We also call a maximal sequence of consecutive descents (respectively, ascents) a \emph{descent run} (respectively, \emph{ascent run}).

We say that patterns $\pi_1$ and $\pi_2$ are \emph{Wilf-equivalent}, denoted $\pi_1\sim\pi_2$, if for all $n\ge 0$, we have $|\Av_n(\pi_1)|=|\Av_n(\pi_2)|$. More recently, Sagan and Savage~\cite{SS12} defined a more refined version of Wilf-equivalence. For a permutation statistic $\st$, we say that $\pi_1$ and $\pi_2$ are \emph{$\st$-Wilf equivalent} if there is a bijection $\Theta:\Av_n(\pi_1)\to\Av_n(\pi_2)$ for all $n\ge 0$ that preserves the statistic $\st$, i.e.\ $\st=\st\circ\,\Theta$. We denote this by $\pi_1\stackrel{\st}{\leftrightsquigarrow}\pi_2$.

For a permutation $\sigma$, we define the following sets (which can also be thought of as permutation statistics):
\begin{itemize}
\item $\Des(\sigma)=\{i\mid\sigma(i)>\sigma(i+1)\}$, the \emph{descent set} of $\sigma$,
\item $\Destop(\sigma)=\{\sigma(i)\mid i\in\Des(\sigma)\}$, the \emph{descent top set} of $\sigma$,
\item $\Desbot(\sigma)=\{\sigma(i+1)\mid i\in\Des(\sigma)\}$, the \emph{descent bottom set} of $\sigma$.
\end{itemize}
Similar sets can be defined for the ascent statistic:
\begin{itemize}
\item $\Asc(\sigma)=\{i\mid\sigma(i)<\sigma(i+1)\}$, the \emph{ascent set} of $\sigma$,
\item $\Ascbot(\sigma)=\{\sigma(i)\mid i\in\Asc(\sigma)\}$, the \emph{ascent bottom set} of $\sigma$,
\item $\Asctop(\sigma)=\{\sigma(i+1)\mid i\in\Asc(\sigma)\}$, the \emph{ascent top set} of $\sigma$.
\end{itemize}
Note that $\Des(\sigma)\cup\Asc(\sigma)=[|\sigma|-1]$, $\Destop(\sigma)\cup\Ascbot(\sigma)=[|\sigma|]\setminus\{\sigma(|\sigma|)\}$, $\Desbot(\sigma)\cup\Asctop(\sigma)=[|\sigma|]\setminus\{\sigma(1)\}$.

Dokos et al.~\cite{DDJSS12} showed that $132\stackrel{\Des}{\leftrightsquigarrow}231$ and $213\stackrel{\Des}{\leftrightsquigarrow}312$. We will briefly outline the proof of the first statement, as the second one is very similar. It is well known that, for a nonempty permutation $\sigma$, we have
\[
\begin{split}
\sigma\in\Av(132) &\iff \sigma=231[\sigma',1,\sigma''] \text{ for some } \sigma',\sigma''\in\Av(132),\\
\sigma\in\Av(231) &\iff \sigma=132[\sigma',1,\sigma''] \text{ for some } \sigma',\sigma''\in\Av(231).
\end{split}
\]
Then the following map $\phi:\Av(132)\to\Av(231)$ is a $\Des$-preserving bijection. Let $\sigma\in\Av(132)$, then $\sigma=\emptyset$ or $\sigma=231[\sigma',1,\sigma'']$ for some $\sigma',\sigma''\in\Av(132)$. Then we let $\phi(\emptyset)=\emptyset$, and
\[
\phi(231[\sigma',1,\sigma''])=132[\phi(\sigma'),1,\phi(\sigma'')] \qquad \text{if } \sigma\ne\emptyset.
\]
Indeed, $\sigma'$ and $\phi(\sigma')$ occupy the same positions, as do $\sigma''$ and $\phi(\sigma'')$, so their descent positions are preserved under $\phi$. The only remaining descent exists if and only if $\sigma''\ne\emptyset$, and is the position of the singleton $1$ that corresponds to the top value $|\sigma|$ of both $\sigma$ and $\phi(\sigma)$, so that descent position is also preserved.

Dokos et al.~\cite{DDJSS12} also conjectured that $3142$, $3241$, $4132$ are $\maj$-Wilf equivalent, as are $1423$, $2314$, $2413$, where $\maj(\sigma)=\sum_{i\in\Des(\sigma)}{i}$ is the \emph{major index} of $\sigma$. This was later refined by Bloom~\cite{Bloom14}, who showed that, in fact, $1423$, $2314$, $2413$ are $\Des$-Wilf equivalent, as are $3142$, $3241$, $4132$.

In this paper, we will consider the distribution of $\Destop$ and $\Desbot$ on the same sets. We will show that $132$, $231$, $312$ are $\Destop$-Wilf equivalent, and furthermore, that $231$ and $312$ are $(\Destop,\Desbot)$-Wilf equivalent, then conjecture $\Destop$- and $(\Destop,\Desbot)$-Wilf equivalences on permutations of length $4$.

Note that $\Desbot$-Wilf equivalences follow from $\Destop$-Wilf equivalences by symmetries of the square. Define the following maps on $S_n$ for each $n\ge 0$:
\begin{itemize}

\item \emph{reversal}, $r:\sigma\mapsto\sigma^r$, where $\sigma^r(i)=\sigma(n+1-i)$,

\item \emph{complement}, $c:\sigma\mapsto\sigma^c$, where $\sigma^c(i)=n+1-\sigma(i)$,

\item \emph{inverse}, $\sigma\mapsto\sigma^{-1}$.

\end{itemize} 

The group of bijections generated by reversal, complement, and inverse are called symmetries of the square. In particular, the reversal of complement $r\circ c=c\circ r: \sigma\mapsto\sigma^{rc}$, given by $\sigma^{rc}(i)=n+1-\sigma(n+1-i)$ maps descents to descents and ascents to ascents. More precisely, 
\[
\begin{split}
i\in\Des(\sigma) &\iff n-i\in\Des(\sigma^{rc}),\\
i\in\Destop(\sigma) &\iff n+1-i\in\Desbot(\sigma^{rc}),\\
i\in\Desbot(\sigma) &\iff n+1-i\in\Destop(\sigma^{rc}),\\
\end{split}
\]
and therefore,
\[
\begin{split}
\sigma\stackrel{\Destop}{\leftrightsquigarrow}\tau &\iff \sigma^{rc}\stackrel{\Desbot}{\leftrightsquigarrow}\tau^{rc},\\
\sigma\stackrel{(\Destop,\,\Desbot)}{\leftrightsquigarrow}\tau &\iff \sigma^{rc}\stackrel{(\Destop,\,\Desbot)}{\leftrightsquigarrow}\tau^{rc}.
\end{split}
\]

\section{$\Destop$- and $\Desbot$-Wilf equivalence in $S_3$} \label{sec:res}

For patterns of length $3$, our results below can be summarized as follows: patterns that have the same descent top set (respectively, the same descent bottom set) are $\Destop$-Wilf equivalent (respectively, $\Desbot$-Wilf equivalent). Moreover, patterns that have both the same descent top set and the same descent bottom set are $(\Destop,\Desbot)$-Wilf equivalent.

\begin{theorem} \label{thm:destop-132-231-312}
Patterns $132$, $231$, and $312$ are $\Destop$-Wilf equivalent. Equivalently, patterns $213$, $231$, and $312$ are $\Desbot$-Wilf equivalent.
\end{theorem}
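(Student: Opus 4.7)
My plan is to construct two recursively defined $\Destop$-preserving bijections, $\Phi\colon\Av(132)\to\Av(231)$ and $\Psi\colon\Av(231)\to\Av(312)$, and then deduce the $\Desbot$ statement from the reverse-complement symmetry. For $\Phi$, I will exploit the fact that both $\sigma\in\Av(132)$ and $\tau\in\Av(231)$ decompose around the global maximum $n$: $\sigma = 231[\sigma',1,\sigma'']$ with $\sigma'$ in the top value range and $\sigma''$ in the bottom, and $\tau = 132[\tau',1,\tau'']$ with $\tau'$ in the bottom value range and $\tau''$ in the middle. A direct check on the positions of consecutive entries gives
\[
\Destop(\sigma) = (\Destop(\sigma')+|\sigma''|)\cup\Destop(\sigma'')\cup(\{n\}\text{ if }\sigma''\ne\emptyset)
\]
and, symmetrically,
\[
\Destop(\tau) = \Destop(\tau')\cup(\Destop(\tau'')+|\tau'|)\cup(\{n\}\text{ if }\tau''\ne\emptyset).
\]
Here $\sigma''$ and $\tau'$ play parallel roles (both occupy the bottom value range), as do $\sigma'$ and $\tau''$. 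This motivates defining $\Phi(231[\sigma',1,\sigma'']) = 132[\Phi(\sigma''),1,\Phi(\sigma')]$ whenever both halves are nonempty, together with the boundary rules $\Phi(n\sigma'') = n\Phi(\sigma'')$ and $\Phi(\sigma'n) = \Phi(\sigma')n$, so that the presence of $n$ as a descent top (equivalently, whether $n$ is the last entry) is preserved. Induction on the length then yields both bijectivity of $\Phi$ and $\Destop\circ\Phi = \Destop$.

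The construction of $\Psi\colon\Av(231)\to\Av(312)$ is more delicate because $\Av(312)$ decomposes around the minimum rather than the maximum: $\omega = 213[\omega',1,\omega'']$, where the inflated singleton is the value $1$. The descent tops now satisfy
\[
\Destop(\omega) = (\Destop(\omega')+1)\cup(\{\omega'(|\omega'|)+1\}\text{ if }\omega'\ne\emptyset)\cup(\Destop(\omega'')+|\omega'|+1),
\]
so the ``join'' descent top is the last entry of $\omega'$ shifted up by one, rather than the global maximum. This is the principal obstacle: a bare swap-and-recurse in the style of $\Phi$ does not work, because the join value is not rigidly determined by the shape of the decomposition and has to be carried along as an auxiliary statistic. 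My plan is to strengthen the inductive hypothesis and construct $\Psi$ as the finer bijection preserving \emph{both} $\Destop$ and $\Desbot$ simultaneously---this is precisely the $(\Destop,\Desbot)$-Wilf equivalence promised in the abstract, and $\Destop$-preservation then follows a fortiori. Composing $\Phi$ with this refined $\Psi$ yields a $\Destop$-preserving bijection $\Av(132)\to\Av(312)$ and completes the proof of the first assertion.

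For the equivalent $\Desbot$ statement I will invoke the reverse-complement symmetry recorded in Section~\ref{sec:prelim}. Since $132^{rc}=213$, $231^{rc}=312$, and $312^{rc}=231$, and since $\sigma\stackrel{\Destop}{\leftrightsquigarrow}\tau$ iff $\sigma^{rc}\stackrel{\Desbot}{\leftrightsquigarrow}\tau^{rc}$, the $\Destop$-Wilf equivalence of $\{132,231,312\}$ transfers at once to a $\Desbot$-Wilf equivalence of their $rc$-images $\{213,312,231\}$, which is exactly the stated equivalence among $\{213,231,312\}$.
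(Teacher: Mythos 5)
Your first bijection $\Phi\colon\Av(132)\to\Av(231)$ is correct and is exactly the paper's: the same decompositions around the global maximum, the same swap of the two blocks when both are nonempty, and the same two boundary rules to keep $n$'s status as a descent top intact. The reverse-complement reduction of the $\Desbot$ statement to the $\Destop$ statement is also exactly what the paper intends by ``equivalently.''

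The gap is in the second bijection. You correctly identify the obstacle (under the decomposition $\omega=213[\omega',1,\omega'']$ of $\Av(312)$, the join descent top is the last entry of $\omega'$ shifted by one, not a value determined by the block sizes), but your resolution---``strengthen the inductive hypothesis and construct $\Psi$ as the finer bijection preserving both $\Destop$ and $\Desbot$''---is an unproven assertion, not a construction. That finer statement is Theorem~\ref{thm:231-312-destop-desbot}, and the paper proves it by a completely different, non-recursive method (the descent-matching characterization of Lemma~\ref{lem:des-match} plus a greedy left-to-right insertion algorithm); nothing suggests it drops out of a recursive swap on the decompositions you name. Indeed the paper explicitly remarks that the composite of its two recursive bijections preserves $\Destop$ but \emph{not} $\Desbot$, which is strong evidence that the recursive framework does not extend to the joint statistic in the way you hope. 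The paper's actual fix for this half is both different and much cheaper: it builds $\Psi\colon\Av(132)\to\Av(312)$ (not $\Av(231)\to\Av(312)$) from the decompositions $\sigma=312[\sigma',\sigma'',1]$ of $\Av(132)$ and $\sigma=132[\sigma',\sigma'',1]$ of $\Av(312)$, i.e.\ splitting at the \emph{last position} rather than at an extremal value. Under $\Psi(312[\sigma',\sigma'',1])=132[\Psi(\sigma''),\Psi(\sigma'),1]$ the rightmost entry of each block is preserved, and the only auxiliary datum you need to carry through the induction is precisely that rightmost entry, which is the extra descent top. You should either adopt that decomposition or supply an actual proof of the $(\Destop,\Desbot)$ claim; as written, the $312$ case is not established.
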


It is easy to see that these are the only $\Destop$-Wilf equivalences in $S_3$, since each of the remaining patterns in $S_3$ has a different set of descent tops, that is $\Destop(132)=\Destop(231)=\Destop(312)=\{3\}$, while $\Destop(123)=\emptyset$, $\Destop(213)=\{2\}$, $\Destop(321)=\{2,3\}$, so all other potential $\Destop$-Wilf equivalences fail at $n=3$.

\begin{proof}
The $\Destop$-preserving bijections in this proof are very similar to the $\Des$-preserving bijection $\phi$ in the previous section.

To show that $132\stackrel{\Destop}{\leftrightsquigarrow}231$, we use the same block decomposition as for $\Des$-Wilf equivalence. Let $\sigma\in\Av(132)$, then $\sigma=\emptyset$ or $\sigma=231[\sigma',1,\sigma'']$ for some $\sigma',\sigma''\in\Av(132)$. Then define a bijection $\Phi:\Av(132)\to\Av(231)$ recursively as follows. Let $\Phi(\emptyset)=\emptyset$ if $\sigma=\emptyset$, and
\begin{equation} \label{eq:Phi}
\begin{split}
\Phi(\sigma'\oplus 1)=\Phi(\sigma')\oplus 1  \qquad &\text{if } \sigma''=\emptyset,\\
\Phi(1\ominus\sigma'')=1\ominus\Phi(\sigma'') \qquad &\text{if } \sigma'=\emptyset,\\
\Phi(231[\sigma',1,\sigma''])=132[\Phi(\sigma''),1,\Phi(\sigma')] \qquad &\text{if } \sigma',\sigma''\ne\emptyset.
\end{split}
\end{equation}
Notice that this is different from the algorithm for $\phi$ in that there are three cases instead of one. This is because, in order for $\Phi$ to preserve descent tops, $\sigma'$ and $\sigma''$ only switch sides relative to the ``1'' (the top entry in $\sigma$) if both $\sigma'$ and $\sigma''$ are nonempty; otherwise, they must stay on the same side of the ``1''. Notice also that $\Phi$ moves the blocks corresponding to $\sigma'$ and $\sigma''$ horizontally, whereas $\phi$ (and $\Psi$ defined in \eqref{eq:Psi} below) moves those blocks vertically.

Since any statistic is clearly preserved if $\Phi(\sigma)=\sigma$, we may assume that $\sigma,\sigma',\sigma''\ne\emptyset$. Indeed, $\sigma'$ and $\Phi(\sigma')$ contain the same interval of values, as do $\sigma''$ and $\Phi(\sigma'')$, so their descent tops are preserved under $\Phi$. The only remaining descent top is the maximum value of $\sigma$ (and $\Phi(\sigma)$), which corresponds to the entry ``$3$'' in both $132$ and $231$, so that descent top is also preserved.

To show that $132\stackrel{\Destop}{\leftrightsquigarrow}312$, we use a slightly different block decomposition. 
It is well-known that, for a nonempty permutation $\sigma$, we have
\[
\begin{split}
\sigma\in\Av(132) &\iff \sigma=312[\sigma',\sigma'',1] \text{ for some } \sigma',\sigma''\in\Av(132),\\
\sigma\in\Av(312) &\iff \sigma=132[\sigma',\sigma'',1] \text{ for some } \sigma',\sigma''\in\Av(312).
\end{split}
\]
Let $\sigma\in\Av(132)$, then $\sigma=\emptyset$ or $\sigma=312[\sigma',\sigma'',1]$ for some $\sigma',\sigma''\in\Av(132)$. Define a bijection $\Psi:\Av(132)\to\Av(312)$ recursively as follows. Let $\Psi(\emptyset)=\emptyset$, and
\begin{equation} \label{eq:Psi}
\Psi(312[\sigma',\sigma'',1])=132[\Psi(\sigma''),\Psi(\sigma'),1].
\end{equation}
Again, assume that $\sigma\ne\emptyset$. From the definition of $\Psi$, we can see that $\Psi$ preserves the rightmost entry of the permutation $\sigma$. Furthermore, $\sigma'$ and $\Psi(\sigma')$ contain the same interval of values, as do $\sigma''$ and $\Psi(\sigma'')$, so their descent tops are preserved under $\Psi$. The only remaining descent top is the rightmost value of $\sigma'$, which is preserved by $\Psi$, and is thus also the rightmost value of $\Psi(\sigma')$. Note that there is a descent from the rightmost value of $\sigma'$ to the leftmost value of $\sigma''\oplus1$ if and only if $\sigma'\ne\emptyset$, and a descent from the rightmost value of $\Psi(\sigma')$ to the rightmost value of $\Psi(\sigma)$ (which is, in fact, $|\sigma''|+1$) also if and only if $\sigma'\ne\emptyset$. Thus, if $\sigma'\ne\emptyset$, then the rightmost value of $\sigma'$ (and of $\Psi(\sigma')$) is a descent top in both $\sigma$ and $\Psi(\sigma)$. This ends the proof.
\end{proof}

\begin{figure}[!ht]
\begin{center}
\begin{tikzpicture}[scale=0.5]

\begin{scope}[xshift=0, yshift=0, local bounding box = Phi]

\tikzstyle{arrow} = [very thick,|->]

\draw [help lines] (0,0) grid (5,5);

\draw [very thick] (0,0) rectangle (2,2);
\draw [very thick] (3,2) rectangle (5,4);
\filldraw (2.5,4.5) circle (4pt);
\draw (1,1) node {$\sigma_1$};
\draw (4,3) node {$\sigma_2$};

\draw [arrow] (6,2.5) -- (7,2.5) node [above left] {\large $\Phi$};

\draw ++(8,0) [help lines] +(0,0) grid +(5,5);

\draw [very thick] ++(8,0) +(3,0) rectangle +(5,2);
\draw [very thick] ++(8,0) +(0,2) rectangle +(2,4);
\filldraw ++(8,0) +(2.5,4.5) circle (4pt);
\draw ++(8,0) +(1,3) node {$\Phi(\sigma_2)$};
\draw ++(8,0) +(4,1) node {$\Phi(\sigma_1)$};

\draw (2.5,6) node (231) {$\Av(231)$};
\draw [arrow] (4.5,6) -- (8.5,6);
\draw ++(8,0) +(2.5,6) node (132) {$\Av(132)$};

\end{scope}

\begin{scope}[xshift=16cm, yshift=0, local bounding box = Psi]

\tikzstyle{arrow} = [very thick,|->]

\draw [help lines] (0,0) grid (5,5);

\draw [very thick] (0,3) rectangle (2,5);
\draw [very thick] (2,0) rectangle (4,2);
\filldraw (4.5,2.5) circle (4pt);
\draw (1,4) node {$\sigma_1$};
\draw (3,1) node {$\sigma_2$};

\draw [arrow] (6,2.5) -- (7,2.5) node [above left] {\large $\Psi$};

\draw ++(8,0) [help lines] +(0,0) grid +(5,5);

\draw [very thick] ++(8,0) +(0,0) rectangle +(2,2);
\draw [very thick] ++(8,0) +(2,3) rectangle +(4,5);
\filldraw ++(8,0) +(4.5,2.5) circle (4pt);
\draw ++(8,0) +(1,1) node {$\Psi(\sigma_2)$};
\draw ++(8,0) +(3,4) node {$\Psi(\sigma_1)$};

\draw (2.5,6) node (132) {$\Av(132)$};
\draw [arrow] (4.5,6) -- (8.5,6);
\draw ++(8,0) +(2.5,6) node (312) {$\Av(312)$};

\end{scope}
\end{tikzpicture}
\end{center}
\caption{Bijections $\Phi$ and $\Psi$ of Theorem~\ref{thm:destop-132-231-312}. Only the case $\sigma',\sigma''\ne\emptyset$ is shown for $\Phi$. For the remaining cases, see Figure~\ref{fig:sum-skewsum}.} \label{fig:destop-132-231-312}
\end{figure}

As we mentioned before, more can be asserted for patterns $231$ and $312$. Before we proceed, we will need another definition.

\begin{definition} \label{def:des-match}
Let $T=\{t_1<\dots<t_k\}$ and $B=\{b_1<\dots<b_k\}$ be (possibly empty) sets of positive integers with $|T|=|B|=k\ge 0$. We call the pair $(T,B)$ a \emph{descent matching} of size $k$ if $k=0$ and $T=B=\emptyset$ or $k>0$ and $t_i>b_i$ for all $i\in[k]$.
\end{definition}

\begin{theorem} \label{thm:231-312-destop-desbot}
Patterns $231$ and $312$ are $(\Destop,\Desbot)$-Wilf equivalent. Moreover, if $(T,B)$ is any descent matching of size $k$, and $n\ge\max(T)$ if $k>0$, or $n\ge 0$ if $k=0$, then there is a unique pair of permutations $\sigma\in\Av_n(312)$ and $\pi\in\Av_n(231)$ such that $\Destop(\sigma)=\Destop(\pi)=T$ and $\Desbot(\sigma)=\Desbot(\pi)=B$.
\end{theorem}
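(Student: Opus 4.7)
My plan is to prove the stronger uniqueness-and-existence claim, from which $(\Destop,\Desbot)$-Wilf equivalence between $\Av_n(312)$ and $\Av_n(231)$ follows immediately via the bijection $\sigma\mapsto\pi$ that sends each $\sigma\in\Av_n(312)$ to the unique $\pi\in\Av_n(231)$ with $(\Destop(\pi),\Desbot(\pi))=(\Destop(\sigma),\Desbot(\sigma))$. I proceed by induction on $n$; the base case $n=0$ is vacuous since the only descent matching is $(\emptyset,\emptyset)$, realized only by the empty permutation in both classes.

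For $\Av_n(231)$, I exploit the standard decomposition $\pi=\pi_1\cdot n\cdot\pi_2$, where $\pi_1\in\Av(231)$ takes values in $[p-1]$, $\pi_2\in\Av(231)$ takes values in $\{p,\dots,n-1\}$, and $p$ is the position of $n$. Since $n$ is maximal, $n\in T$ if and only if $\pi_2\ne\emptyset$. The subcase $n\notin T$ forces $n$ to sit at the end (else $n$ would be a descent top), so $\pi=\pi'\cdot n$ with $\pi'\in\Av_{n-1}(231)$ realizing the same $(T,B)$, which is handled directly by the inductive hypothesis. In the subcase $n\in T$, the boundary descent contributes the pair $(n,\pi_2(1))$, and the descent top/bottom sets split as $\Destop(\pi_1)=T\cap[p-1]$, $\Destop(\pi_2)=T\cap[p,n-1]$, $\Desbot(\pi_1)=B\cap[p-1]$, and $\Desbot(\pi_2)=(B\cap[p,n-1])\setminus\{\pi_2(1)\}$. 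The size condition $|T\cap[p-1]|=|B\cap[p-1]|$ confines $p-1$ to the zeros of the matching path $d(v):=|T\cap[v]|-|B\cap[v]|$, and I expect to pin down $p$ as the largest such zero in $[0,n-2]$, with $\pi_2(1)$ then read off as the first entry of the inductively-given $\pi_2$. The analogous argument for $\Av_n(312)$ uses the decomposition $\sigma=\sigma_L\cdot 1\cdot\sigma_R$ with $\sigma_L$ on $\{2,\dots,q\}$ and $\sigma_R$ on $\{q+1,\dots,n\}$, splits on whether $1\in B$, and identifies $q$ as the smallest zero of $d$ in $[1,n]$ by the symmetry between the two avoidance classes.

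The main obstacle is justifying the "largest zero" rule for $p$ (and dually the "smallest zero" rule for $q$): several candidates may satisfy the cardinality condition alone, and one must show that only one gives a globally consistent decomposition. Concretely, for each candidate pair $(p,b^*)$ with $b^*\in B\cap[p,n-1]$, the inductive hypothesis supplies a unique $\pi_2$ from $(\Destop(\pi_2),\Desbot(\pi_2))=(T\cap[p,n-1],\,(B\cap[p,n-1])\setminus\{b^*\})$, and one must check that the first entry of that $\pi_2$ really equals $b^*$. I anticipate that this follows from a short auxiliary lemma, proved by the same induction, characterizing the first entry of the unique $\pi_2$ in terms of the leftmost zero of its own matching path---making the "largest zero" rule for $p$ the only globally consistent choice. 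All the content of the proof lives in this consistency check; the rest is bookkeeping around the decompositions.
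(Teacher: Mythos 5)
Your strategy is genuinely different from the paper's: you decompose at the extremal entry ($\pi=\pi_1\cdot n\cdot\pi_2$ for $\Av(231)$, resp.\ at the minimum for $\Av(312)$) and induct on $n$, whereas the paper constructs $\sigma\in\Av_n(312)$ directly by a deterministic left-to-right greedy rule ($\sigma(1)=\min\overline{B}$; thereafter take $\min\overline{B}_{i+1}$ after a non-descent-top and the largest available element of $B$ below $\sigma(i)$ after a descent top), proving well-definedness by induction on $n$ via the effect of deleting $\sigma(1)$, with Lemma~\ref{lem:destop-312} ($\overline{\Desbot(\sigma)}=\LRmax(\sigma)$ on $\Av(312)$) supplying the structural fact that pins each entry down. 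The decomposition approach is perfectly viable and arguably more conceptual, but as written your argument has a genuine gap, and it sits exactly where you say it does: the ``largest zero'' rule for $p$ and the consistency check $\pi_2(1)=b^*$ are asserted (``I anticipate that this follows from a short auxiliary lemma''), not proved. Since you yourself note that ``all the content of the proof lives in this consistency check,'' what you have is a plan rather than a proof. Concretely, two things are missing. First, \emph{existence}: you must show that for $p-1$ the largest zero of $d$ in $[0,n-2]$ there is a choice of $b^*\in B\cap[p,n-1]$ for which $\bigl(T\cap[p,n-1],\,(B\cap[p,n-1])\setminus\{b^*\}\bigr)$ is again a descent matching \emph{and} the unique $\pi_2$ it produces really starts with $b^*$. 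Second, \emph{uniqueness}: you must show every other zero $v$ of $d$, and every other choice of $b^*$, fails the check. Neither is routine bookkeeping; e.g.\ already for $T=\{2,4\}$, $B=\{1,3\}$, $n=4$, the candidate $p=1$ with $b^*=3$ produces a legitimate descent matching $(\{2\},\{1\})$ whose unique $231$-avoider is $213$, and only the computation of its first entry ($2\ne 3$) rules it out.

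The missing lemma you would need is: for the unique $\pi\in\Av(231)$ on an interval of values $[a,a+m-1]$ with descent matching $(T',B')$, the first entry $\pi(1)$ is the \emph{smallest} $v\ge a$ with $|T'\cap[a,v]|=|B'\cap[a,v]|$. This is true (it follows from the fact that in a $231$-avoider all values below $\pi(1)$ precede all values above it, so $[a,\pi(1)]$ is the value set of a prefix and hence balanced, plus an induction showing no smaller $v$ is balanced), and it is exactly the $\Av(231)$-counterpart of the paper's observation that $\sigma(1)=\min\overline{B}$ for $\sigma\in\Av_n(312)$; combined with the inequality $|T'\cap[a,v]|\le|B'\cap[a,v]|$ valid for every descent matching, it yields both that $d(v)<0$ for $p\le v\le n-2$ (so the largest zero is forced) and that the forced $b^*$ is consistent. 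Until you state and prove that lemma, the induction does not close.
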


Note that the bijection we will define in the proof (which begins on page \pageref{proof:231-312-destop-desbot}) is different from $\Phi\circ\Psi^{-1}$ (or $\Psi\circ\Phi^{-1}$), which only preserves $\Destop$ but not $\Desbot$.

In order to prove Theorem~\ref{thm:231-312-destop-desbot}, it is helpful to restate it in terms of sets $\overline{T}=[n]\setminus T$ and $\overline{B}=[n]\setminus B$. Note that $i\in\overline{T}$ if and only if $i$ is an ascent bottom or is the rightmost entry in the permutation, and $i\in\overline{B}$ if and only if $i$ is an ascent top or is the leftmost entry in the permutation. Let us extend the definition of descent runs to include descent runs of length $1$, i.e. single entries that are neither descent tops nor descent bottoms. Then $\overline{B}$ is the set of \emph{descent run tops}, and $\overline{T}$ is the set of \emph{descent run bottoms}, or, in other words, the sets of initial and final entries, respectively, of descent runs (including those of length $1$).

\begin{lemma} \label{lem:des-match}
An ordered pair of sets $(T,B)$ is a descent matching if and only if for any $n\ge\max T$ there exists a permutation $\sigma\in S_n$ such that $T=\Destop(\sigma)$ and $B=\Desbot(\sigma)$.
\end{lemma}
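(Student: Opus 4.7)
The plan is to prove both directions of the equivalence separately. The ``only if'' direction (existence of a realizing $\sigma$ forces $(T,B)$ to be a descent matching) is the easier one. Given $\sigma \in S_n$ with $\Destop(\sigma) = T$ and $\Desbot(\sigma) = B$, the assignment $\sigma(i) \mapsto \sigma(i+1)$ on the descent positions of $\sigma$ is a bijection $f : T \to B$ satisfying $f(t) < t$, so in particular $|T| = |B|$. To deduce $t_i > b_i$ when both sets are sorted in increasing order, I would use the following rank argument: $f$ carries the $i$ smallest elements $\{t_1, \dots, t_i\}$ of $T$ injectively into $B \cap [1, t_i - 1]$, so $|B \cap [1, t_i - 1]| \ge i$, and hence $b_i \le t_i - 1 < t_i$.

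For the ``if'' direction, given a descent matching $(T, B)$ of size $k$ and any $n \ge \max T$ (or $n \ge 0$ when $k=0$), I would construct a realizing $\sigma$ explicitly. The plan is to pair $t_i$ with $b_i$ for each $i \in [k]$ and form a directed graph on $T \cup B$ with edges $t_i \to b_i$. Since every vertex has in-degree at most $1$ and out-degree at most $1$, and every edge strictly decreases in value (that is exactly the content of $t_i > b_i$), this graph is a disjoint union of decreasing paths with no cycles. Reading each such path in decreasing order yields a descent run whose descent tops are its non-final vertices (all elements of $T$) and whose descent bottoms are its non-initial vertices (all elements of $B$); the elements of $[n] \setminus (T \cup B)$ play the role of length-$1$ runs. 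I would then order all these runs (including singletons) by their minima in increasing order and concatenate them. Because the minimum of each run is strictly less than the minimum, and hence than the maximum (= first entry), of the next run, no spurious descent is created at any boundary, and the resulting $\sigma \in S_n$ has $\Destop(\sigma) = T$ and $\Desbot(\sigma) = B$.

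The main obstacle I anticipate is verifying both the structural claim about the auxiliary graph (no cycles, a clean disjoint union of paths) and the claim that concatenation by increasing minimum produces no unintended descents. Both follow from the inequality $t_i > b_i$: strict decrease along every edge rules out cycles, and ordering runs by minima ensures the desired ascent at every boundary. A secondary point is that the construction is uniform in $n$: once $n \ge \max T$, increasing $n$ only adds more singleton runs to the concatenation, without disturbing the descent structure on $T \cup B$. This freedom is exactly what the lemma's ``for any $n \ge \max T$'' clause requires.
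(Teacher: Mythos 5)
Your proof is correct and follows essentially the same route as the paper: the forward direction is the paper's construction (chain $t_i$ onto $b_i$ to build decreasing runs, adjoin the elements of $\overline{T}\cap\overline{B}$ as singletons, and concatenate in increasing order of run bottoms), with your directed-graph language just making explicit the ``easy to check'' step that the runs are well-defined decreasing paths. Your converse is the same pigeonhole argument, phrased as a direct rank count via the bijection $f\colon T\to B$ rather than by contradiction.
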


\begin{proof}
If $T=B=\emptyset$, then let $\sigma=\emptyset\in S_0$. Now assume $|T|=|B|=k>0$ and let $n\ge\max T$. Suppose that $T=\{t_1<\dots<t_k\}$ and $B=\{b_1<\dots<b_k\}$. Form non-singleton descent runs from $T$ and $B$ so that, for each $i\in[k]$, the entry $t_i$ is followed by $b_i$. For each value $v$ in $\overline{T}\cap\overline{B}$, adjoin $v$ as a singleton descent run. Now concatenate all resulting descent runs in the increasing order of descent run tops (alternatively, in the increasing order of descent run bottoms). It is easy to check that this yields a permutation $\sigma\in S_n$ with $\Destop(\sigma)=T$ and $\Desbot(\sigma)=B$.

Conversely, let $\sigma\in S_n$ be a permutation with $\Destop(\sigma)=T=\{t_1<\dots<t_k\}$ and $\Desbot(\sigma)=B=\{b_1<\dots<b_k\}$. Let $i\in[k]$, and suppose that $t_i\le b_i$. Then each of the values $t_1,\dots,t_i$ must be followed by one of the values among $b_1,\dots,b_{i-1}$, which is impossible by the pigeonhole principle. Therefore, $t_i>b_i$ for each $i\in[k]$, i.e. $(T,B)$ is a descent matching.
\end{proof}

We also need to define left-to-right/right-to-left maxima/minima of a permutation. We say that an entry $\sigma(i)$ of a permutation $\sigma\in S_n$ is
\begin{itemize}
\item a \emph{left-to-right maximum} if $i=1$ or $\sigma(j)<\sigma(i)$ for all $j<i$.
\item a \emph{left-to-right minimum} if $i=1$ or $\sigma(j)>\sigma(i)$ for all $j<i$.
\item a \emph{right-to-left maximum} if $i=n$ or $\sigma(j)<\sigma(i)$ for all $j>i$.
\item a \emph{right-to-left minimum} if $i=n$ or $\sigma(j)>\sigma(i)$ for all $j>i$.
\end{itemize}
Let $\LRmax(\sigma)$ be the set of left-to-right maxima of $\sigma$, and define $\LRmin(\sigma)$, $\RLmax(\sigma)$, $\RLmin(\sigma)$ similarly.

\begin{lemma} \label{lem:destop-312}
For any permutation $\sigma\in\Av_n(312)$, $\overline{\Desbot(\sigma)}=\LRmax(\sigma)$. Equivalently, for any permutation $\sigma\in\Av_n(231)$, $\overline{\Destop(\sigma)}=\RLmin(\sigma)$.
\end{lemma}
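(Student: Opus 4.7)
The plan is to prove the first statement directly, by showing both inclusions, and then deduce the equivalent statement by the reversal-complement symmetry $rc$ already established in the preliminaries.

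First I would note that the inclusion $\LRmax(\sigma)\subseteq\overline{\Desbot(\sigma)}$ holds for any permutation, with no pattern-avoidance hypothesis needed: if $\sigma(j)\in\LRmax(\sigma)$, then either $j=1$ or $\sigma(j-1)<\sigma(j)$, so $\sigma(j)$ is not a descent bottom. The content of the lemma therefore lies in the reverse inclusion, which is where the $312$-avoidance hypothesis enters.

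For the reverse inclusion, I would argue by contrapositive. Suppose $\sigma\in\Av_n(312)$ and $\sigma(j)\notin\LRmax(\sigma)$; the goal is to show $\sigma(j)\in\Desbot(\sigma)$. Since $\sigma(j)$ is not a left-to-right maximum, the set $\{i<j\mid\sigma(i)>\sigma(j)\}$ is nonempty; let $i$ be its largest element. By maximality of $i$, every entry strictly between positions $i$ and $j$ is less than $\sigma(j)$. If $i<j-1$, then $\sigma(i),\sigma(i+1),\sigma(j)$ satisfies $\sigma(i)>\sigma(j)>\sigma(i+1)$, giving an occurrence of $312$ in $\sigma$, a contradiction. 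Hence $i=j-1$, which means $\sigma(j-1)>\sigma(j)$ and $\sigma(j)\in\Desbot(\sigma)$, as required. This proves the first statement.

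For the second statement, I would invoke the $rc$-symmetry: $\sigma\in\Av_n(231)$ if and only if $\sigma^{rc}\in\Av_n(312)$, so the already-proved first statement applies to $\sigma^{rc}$, giving $\overline{\Desbot(\sigma^{rc})}=\LRmax(\sigma^{rc})$. Using the relation $i\in\Desbot(\sigma^{rc})\iff n+1-i\in\Destop(\sigma)$ recorded in Section~\ref{sec:prelim}, and checking (directly from the definitions) that $v\in\LRmax(\sigma^{rc})\iff n+1-v\in\RLmin(\sigma)$, one translates the first statement for $\sigma^{rc}$ into $\overline{\Destop(\sigma)}=\RLmin(\sigma)$ for $\sigma\in\Av_n(231)$. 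The main (very modest) obstacle is making the $312$-pattern extraction clean in the reverse inclusion; everything else is a symmetry bookkeeping argument.
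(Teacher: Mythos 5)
Your proof is correct, and for the substantive inclusion $\overline{\Desbot(\sigma)}\subseteq\LRmax(\sigma)$ it takes a genuinely more direct route than the paper. The paper first establishes an auxiliary monotonicity claim --- that the entries of $\overline{\Desbot(\sigma)}$ (the descent run tops), read left to right, occur in increasing order of value, extracting a $312$ from $\sigma(i_j)$, the entry immediately preceding $\sigma(i_{j+1})$, and $\sigma(i_{j+1})$ if this failed --- and then rules out a non-left-to-right-maximum by locating the top of the descent run containing a larger, earlier entry and contradicting that monotonicity. You collapse this into a single contrapositive step: take the rightmost position $i<j$ with $\sigma(i)>\sigma(j)$, note that maximality forces every intermediate entry below $\sigma(j)$, and extract the $312$ occurrence $\sigma(i)\,\sigma(i+1)\,\sigma(j)$ unless $i=j-1$, in which case $\sigma(j)$ is a descent bottom. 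Your version is shorter and avoids the descent-run bookkeeping; the paper's version has the minor side benefit of recording explicitly that the descent run tops of a $312$-avoider form an increasing subsequence. The easy inclusion $\LRmax(\sigma)\subseteq\overline{\Desbot(\sigma)}$ (valid with no avoidance hypothesis) and the reverse-complement translation to the $\Av_n(231)$ statement are handled essentially identically in both proofs, and your check that $\LRmax(\sigma^{rc})$ corresponds to $\RLmin(\sigma)$ under $v\mapsto n+1-v$ is exactly the bookkeeping the paper performs.
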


\begin{proof}
If an element $\sigma(i)\in\Desbot(\sigma)$, then $\sigma(i-1)>\sigma(i)$, so $\sigma\notin\LRmax(\sigma)$. Thus, $\LRmax(\sigma)\subseteq\overline{\Desbot(\sigma)}$. 

Conversely, let $\sigma\in\Av_n(312)$ and suppose that elements of $\overline{\Desbot(\sigma)}$ occupy positions $i_1<\dots<i_\ell$ in $\sigma$. We claim that $\sigma(i_1)<\dots<\sigma(i_\ell)$. Indeed, assume that $\sigma(i_j)>\sigma(i_{j+1})$ for some $j\in[1,\ell-1]$. Since $\sigma(i_{j+1})$ is not a descent bottom, it is an ascent top, i.e.\ $\sigma(i_{j+1}-1)<\sigma(i_{j+1})$. Therefore, $\sigma(i_j)\sigma(i_{j+1}-1)\sigma(i_{j+1})$ is an instance of pattern $312$. But this is impossible, since $\sigma\in\Av_n(312)$. Therefore, $\sigma(i_1)<\dots<\sigma(i_\ell)$.

Now suppose that some $\sigma(i_j)\notin\LRmax(\sigma)$ for some $j\in[1,\ell]$, that is $\sigma(i_j)<\sigma(i')$ for some $i'<i_j$. Then $\sigma(i')$ belongs to some descent run of $\sigma$ with descent run top $\sigma(i'')$ for some $i''\le i'$. But then $\sigma(i'')\in\overline{\Desbot(\sigma)}$, $i''<i_j$, and $\sigma(i'')>\sigma(i_j)$, which is impossible by the argument in the previous paragraph. Therefore, $\overline{\Desbot(\sigma)}\subseteq\LRmax(\sigma)$, and thus, $\overline{\Desbot(\sigma)}=\LRmax(\sigma)$

Finally, we can obtain the second equality from the first by using the reverse complement operation. In other words, if $\sigma\in\Av_n(231)$, then $\sigma^{rc}\in\Av_n(312)$, so $\overline{\Desbot(\sigma)}=\LRmax(\sigma)$ and thus, $
\overline{\Destop(\sigma)}=\overline{\Desbot(\sigma^{rc})}=\LRmax(\sigma^{rc})=\RLmin(\sigma)$.
\end{proof}

\begin{proof}[of Theorem~\ref{thm:231-312-destop-desbot}] \label{proof:231-312-destop-desbot}
Let $(T,B)$ be a descent matching of size $k\ge 0$, and let $n\ge\max(T)$ if $k>0$, or $n\ge 0$ if $k=0$. As before, let $\overline{T}=[n]\setminus T$, $\overline{B}=[n]\setminus B$. Moreover, for each $i=1,\dots,n$, given a left prefix $\sigma(1)\dots\sigma(i-1)$ of a permutation $\sigma$, let $\Sigma_i=\{\sigma(j)\mid 1\le j<i\}$ (so $\Sigma_1=\emptyset$), and let $T_i=T\setminus\Sigma_i$, $B_i=B\setminus\Sigma_i$, $\overline{T}_i=\overline{T}\setminus\Sigma_i$, $\overline{B}_i=\overline{B}\setminus\Sigma_i$. Note that $T_i\cup\overline{T}_i=B_i\cup\overline{B}_i=[n]\setminus\Sigma_i=\{\sigma(j)\mid i\le j\le n\}$.

We claim that the following algorithm produces a permutation $\sigma\in\Av_n(312)$.
\begin{itemize}

\item Let $\sigma(1)=\min\overline{B}$.

\item For each $i$ from $1$ to $n-1$:

\begin{itemize}

\item If $\sigma(i)\in\overline{T}_{i}$, let $\sigma(i+1)=\min\overline{B}_{i+1}$.

\item If $\sigma(i)\in T_{i}$, let $\sigma(i+1)=\max\,\{m\in B_{i+1}\mid m<\sigma(i)\}$.

\end{itemize}

\end{itemize}

See Example~\ref{ex:destop-desbot} on page~\pageref{ex:destop-desbot} for an example of the application of the above algorithm.

We claim that every $\sigma(i)$ for $i\in[n]$ is well-defined if $(T,B)$ is a descent matching. Indeed, we have $\max B<\max T\le n$, so $n\in\overline{B}$, and thus $\overline{B}\ne\emptyset$ and $\sigma(1)$ is well-defined. In particular, this claim is true for $n=1$. Let $n\ge 2$, and suppose this claim is true for $n-1$. Let us construct $\sigma\in\Av_n(312)$ that corresponds to the pair $(T,B)$. We know that $\sigma(1)\in\overline{B}$, so $\sigma(1)\notin B$. 

\emph{Case 1.} Suppose $\sigma(1)\notin T_1=T$ as well. Define sets $T'$ and $B'$ as follows:  
\[
\begin{split}
T'&=\{t\mid t\in T \text{ and } t<\sigma(1)\}\cup\{t-1\mid t\in T \text{ and } t>\sigma(1)\}\\
B'&=\{b\mid b\in B \text{ and } b<\sigma(1)\}\cup\{b-1\mid b\in B \text{ and } b>\sigma(1)\},
\end{split}
\]
so $\sigma$ is well-defined as well. Note that $|T'|=|T|=|B|=|B'|$, so $(T',B')$ is also a descent matching. Let $\sigma'\in S_{n-1}$ be the permutation of length $n-1$ constructed from the pair $(T',B')$. By the induction hypothesis, we know that $\sigma'\in\Av_{n-1}(312)$, so, in particular, all the values in $[1,\sigma'(1)]$ occur in decreasing order from left to right in $\sigma'$. Then
\[
\sigma(i)=
\begin{cases}
\sigma'(i-1), & \text{if } i\ge 2 \text{ and } \sigma'(i-1)<\sigma(1),\\
\sigma'(i-1)+1, & \text{if } i\ge 2 \text{ and } \sigma'(i-1)\ge\sigma(1).
\end{cases}
\]
Thus, $\sigma\in\Av_n(312)$, since $\sigma'\in\Av_{n-1}(312)$ and no instance of pattern $312$ can start with $\sigma(1)$.

\emph{Case 2.} Now suppose that $\sigma(1)\in T_1=T$ (in particular, this implies that $\sigma(1)\ge 2$). Since $\sigma(1)=\min\overline{B}$, it follows that $[1,\sigma(1)-1]\subseteq B$, so $\sigma(2)=\sigma(1)-1$. With $T=\{t_1<\dots<t_k\}$ and $B=\{b_1<\dots<b_k\}$ as before, suppose $\sigma(1)=t_r$. Then $\sigma(2)=\sigma(1)-1=b_s$ for some $s\ge r$ (otherwise, if $s<r$, we would have $\sigma(1)=t_r>b_r>b_s=\sigma(1)-1$, which is impossible). Define $T''=\{t_1'',\dots,t''_{k-1}\}$ and $B''=\{b_1'',\dots,b''_{k-1}\}$ as follows:
\[
t_i''=
\begin{cases}
t_i, & \text{if } i<r,\\
t_{i+1}-1, & \text{if } i\ge r,
\end{cases}
\qquad \text{and} \qquad
b_i''=
\begin{cases}
b_i, & \text{if } i<s,\\
b_{i+1}-1, & \text{if } i\ge s.
\end{cases}
\]
To see that $t_1''<\dots<t''_{k-1}$ and $b_1''<\dots<b''_{k-1}$, we only need to check that $t_r''>t_{r-1}''$ and $b_s''>b_{s-1}''$. Indeed, $t_r''=t_{r+1}-1\ge t_r>t_{r-1}=t_{r-1}''$ and $b_s''=b_{s+1}-1\ge b_s>b_{s-1}=b_{s-1}''$. It is also easy to check that $t_i''>b_i''$ for $i<r$ and $t\ge s$. Moreover, for $r\le i<s$, we have $t_i''=t_{i+1}-1>t_r-1=\sigma(1)-1=b_s>b_i=b_i''$, so $t_i''>b_i''$ for $r\le i<s$ as well. Thus, $(T'',B'')$ is a descent matching. Let $\sigma''\in S_{n-1}$ be the permutation of length $n-1$ constructed from descent matching $(T'',B'')$. Then $\sigma(2)=\sigma(1)-1=\min\overline{B''}$ since $\sigma(1)-2\in B$ if $\sigma(1)-1>1$, and therefore, $\sigma''(1)=\sigma(2)=\sigma(1)-1$. By the induction hypothesis, we know that $\sigma''\in\Av_{n-1}(312)$, so, in particular, all the values in $[1,\sigma''(1)]$ occur in decreasing order from left to right in $\sigma''$. Thus, as in Case 1, we have
\[
\sigma(i)=
\begin{cases}
\sigma''(i-1), & \text{if } i\ge 2 \text{ and } \sigma''(i-1)<\sigma(1),\\
\sigma''(i-1)+1, & \text{if } i\ge 2 \text{ and } \sigma''(i-1)\ge\sigma(1).
\end{cases}
\]
Thus, $\sigma\in\Av_n(312)$, since $\sigma''\in\Av_{n-1}(312)$ and no instance of pattern $312$ can start with $\sigma(1)$.

This finishes the induction step, and therefore our claim is true for any $n\ge 1$.

Note that $\sigma(i+1)\in B$ and $\sigma(i)\in T$ exactly when $\sigma(i)>\sigma(i+1)$, i.e.\ when $i\in\Des(\sigma)$. Thus, $\Destop(\sigma)=T$ and $\Desbot(\sigma)=B$, and moreover, $\overline{B}=\LRmax(\sigma)$.


Finally, given a permutation $\pi\in\Av(231)$, we have $\pi^{rc}\in\Av(312)$. Let $T=\Destop(\pi)$, $B=\Desbot(\pi)$, and define $T^c=\{n+1-i\mid i\in T\}$, $B^c=\{n+1-i\mid i\in B\}$, then $\Destop(\pi^{rc})=B^c$, $\Desbot(\pi^{rc})=T^c$, so $\pi^{rc}$ is uniquely determined by the argument above, and thus, so is $\pi$.
\end{proof}

In fact, given a descent matching $(T,B)$, it is straightforward to see that we can find the associated $\pi\in\Av(231)$ directly using the following algorithm. For each $i=1,\dots,n$, given a right suffix $\pi(i+1)\dots\pi(n)$ of a permutation $\pi$, let $\Pi_{(i)}=\{\pi(j)\mid i<j\le n\}$ be the set of its letters (so $\Pi_{(n)}=\emptyset$), and let $T_{(i)}=T\setminus\Pi_{(i)}$, $B_{(i)}=B\setminus\Pi_{(i)}$, $\overline{T}_{(i)}=\overline{T}\setminus\Pi_{(i)}$, $\overline{B}_{(i)}=\overline{B}\setminus\Pi_{(i)}$. Note that $T_{(i)}\cup\overline{T}_{(i)}=B_{(i)}\cup\overline{B}_{(i)}=[n]\setminus\Pi_{(i)}=\{\sigma(j)\mid 1\le j\le i\}$.
\begin{itemize}

\item Let $\sigma(n)=\max\overline{T}$.

\item For each $i$ from $n-1$ to $1$ (in decreasing order):

\begin{itemize}

\item If $\sigma(i+1)\in\overline{B}_{(i+1)}$, let $\sigma(i)=\max\overline{T}_{(i)}$.

\item If $\sigma(i+1)\in B_{(i+1)}$, let $\sigma(i)=\min\,\{m\in T_{(i)}\mid m>\sigma(i+1)\}$.

\end{itemize}

\end{itemize}

\begin{example} \label{ex:destop-desbot}
Let $T=\{2,5,8,9\}$, $B=\{1,2,3,7\}$, and $n=9$. Then $(T,B)$ is a descent matching (since $2>1$, $5>2$, $8>3$, $9>7$), so we obtain $\sigma=453687921\in\Av_9(312)$ and $\pi=921534687\in\Av_9(231)$ with $\Destop(\sigma)=\Destop(\pi)=T$ and $\Desbot(\sigma)=\Desbot(\pi)=B$. Note also that $\overline{B}=\{4,5,6,8,9\}=\LRmax(\sigma)$ and $\overline{T}=\{1,3,4,6,7\}=\RLmin(\pi)$. See Figure~\ref{fig:destop-desbot} for permutation diagrams of $\sigma$ and $\tau$.
\begin{figure}[!ht]
\begin{center}
\begin{tikzpicture}[scale=0.5]

\begin{scope}[xshift=0, yshift=0, local bounding box = 312]

\draw[help lines] +(0,0) grid +(8,8);


\foreach \x in {1,2,3,4,5,6,7,8,9} 
{
\node[below] at (\x-1,0) {\small $\x$};
}

\foreach \x in {1,2,3,4,5,6,7,8,9} 
{
\node[left] at (0,\x-1) {\small $\x$};
}

\foreach \x/\y in {1/4,2/5,3/3,4/6,5/8,6/7,7/9,8/2,9/1}
{
\filldraw (\x-1,\y-1) circle (4pt);
}

\draw [very thick] ++(-1,-1) 
+(2,5) -- +(3,3) 
+(5,8) -- +(6,7) 
+(7,9) -- +(8,2) -- +(9,1);

\node[below] at (4,-1) {$\sigma=453687921\in\Av_9(312)$};

\end{scope}

\begin{scope}[xshift=9.75cm, yshift=0, local bounding box = iff]

\node at (0,4) {$\longleftrightarrow$};

\end{scope}

\begin{scope}[xshift=12cm, yshift=0, local bounding box = 231]

\draw[help lines] +(0,0) grid +(8,8);


\foreach \x in {1,2,3,4,5,6,7,8,9} 
{
\node[below] at (\x-1,0) {\small $\x$};
}

\foreach \x in {1,2,3,4,5,6,7,8,9} 
{
\node[left] at (0,\x-1) {\small $\x$};
}

\foreach \x/\y in {1/9,2/2,3/1,4/5,5/3,6/4,7/6,8/8,9/7}
{
\filldraw (\x-1,\y-1) circle (4pt);
}

\draw [very thick] ++(-1,-1) 
+(1,9) -- +(2,2) -- +(3,1)
+(4,5) -- +(5,3) 
+(8,8) -- +(9,7) 
;

\node[below] at (4,-1) {$\tau=921534687\in\Av_9(231)$};

\end{scope}

\end{tikzpicture}
\end{center}
\caption{An instance of $(\Destop,\Desbot)$-preserving bijection from Theorem~\ref{thm:231-312-destop-desbot} between $\Av(312)$ and $\Av(231)$ with descent matching $(T,B)=(\{2,5,8,9\},\{1,2,3,7\})$ and size $n=9$.} \label{fig:destop-desbot}
\end{figure}
\end{example}

\section{Distribution of $(2\underline{31}, \underline{31}2)$} \label{sec:dist}

In this section, we will generalize the results of Section \ref{sec:res} to find the joint distribution of a pair of vincular (generalized) patterns on permutations with fixed sets of descent tops and descent bottoms. A pattern is called \emph{vincular} (or \emph{generalized}) if some of its entries in consecutive positions must also occupy consecutive positions in its occurrences. For example, in patterns denoted $2\underline{31}$ and $\underline{31}2$ (in older notation, $2\textrm{-}31$ and $31\textrm{-}2$, respectively), the entries corresponding to ``$3$'' and ``$1$'' must be in consecutive position in its occurrences. Moreover, let $2_i\underline{31}$ and $\underline{31}2_i$ be occurrences of patterns $2\underline{31}$ and $\underline{31}2$ where the ``$2$'' has value $i$. Finally, given a pattern $\tau$ and a permutation $\sigma$, let $(\tau)\sigma$ be the number of occurrences of $\tau$ in $\sigma$. This makes $(\tau)$ a permutation statistic.  Then,
\[
(2\underline{31})=\sum_{i\ge 1} (2_i\underline{31}), \qquad (\underline{31}2)=\sum_{i\ge 1} (\underline{31}2_i).
\]
Clearly, $(2_i\underline{31})=0$ and $(\underline{31}2_i)=0$ when $i=1$ and $i=|\tau|$, but we include these values so as to treat all  $i\ge 1$ uniformly.

For a set $X\subseteq\mathbb{Z}$, define 
\[
X_{<i}=\{j\in X\mid j<i\}, \qquad X_{\le i}=\{j\in X\mid j\le i\}.
\]
Given a descent matching $(T,B)$ and $n\ge|T|=|B|$, 
define the \emph{signature function} $\sg_{n,(T,B)}$ of a permutation $\sigma\in S_n$ with $(\Destop,\Desbot)\sigma=(T,B)$ as follows:
\begin{equation} \label{eq:sg}
\sg_{n,(T,B)}(i)=|B_{<i}|-|T_{\le i}|+1, \qquad 1\le i\le n.
\end{equation}
This generalizes the signature function defined for Dumont permutations of the first kind in Burstein et al.~\cite{BJVS}. Note that $\sg_{n,(T,B)}(i)=1$ for $i=1$ and $i\ge\max T$.

For an integer $i\ge 1$, define
\[
[i]_{p,q}=\frac{p^{i}-q^{i}}{p-q}=\sum_{\substack{i_1,i_2\ge 0\\i_1+i_2=i-1}}p^{i_1}q^{i_2}.
\]

\begin{lemma} \label{lem:signature}
For any descent matching $(T,B)$, any $n\ge 0$, any $i\in[n]$, and any $\sigma\in S_n$ such that $\Destop(\sigma)=T$ and $\Desbot(\sigma)=B$, we have
\begin{equation} \label{eq:signature}
(2_i\underline{31})\sigma+(\underline{31}2_i)\sigma=\sg_{n,(T,B)}(i)-1.
\end{equation}
\end{lemma}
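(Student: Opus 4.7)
The plan is to interpret both sides of \eqref{eq:signature} as counts of certain descents in $\sigma$ and then match them with a short counting argument; no pattern-avoidance hypothesis on $\sigma$ is needed.

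First, I would rewrite the left-hand side in terms of descents. Since $i$ occupies a unique position $a$ in $\sigma$, every occurrence of $2_i\underline{31}$ or $\underline{31}2_i$ is determined by the descent position $b$ for which the consecutive descent $\sigma(b)\sigma(b+1)$ satisfies $\sigma(b+1) < i < \sigma(b)$ (this strict inequality forces $a \neq b, b+1$ automatically). Conversely, any descent position $b$ satisfying this inequality produces exactly one such occurrence, classified as $2_i\underline{31}$ when $a<b$ and as $\underline{31}2_i$ when $a>b+1$. Hence
\[
(2_i\underline{31})\sigma + (\underline{31}2_i)\sigma = |\{b\in\Des(\sigma) : \sigma(b+1)<i<\sigma(b)\}|.
\]

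Next, I would evaluate this count using only the sets $T$ and $B$. The descents of $\sigma$ are in bijection with the ordered pairs $(t,b')$ obtained by pairing each $\sigma(b)\in T$ with its successor $\sigma(b+1)\in B$; the projections to tops and bottoms are bijections onto $T$ and $B$ respectively, and every such pair satisfies $b'<t$. The key observation is that every pair with $t\le i$ automatically has $b'<t\le i$, hence $b'<i$. Therefore the descent pairs with $b'<i$ split as a disjoint union
\[
\{(t,b') : b'<i\} \;=\; \{(t,b') : t\le i\} \;\sqcup\; \{(t,b') : b'<i<t\}.
\]
Taking cardinalities gives $|\{(t,b') : b'<i<t\}| = |B_{<i}| - |T_{\le i}|$, which equals $\sg_{n,(T,B)}(i)-1$ by definition of the signature function. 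Combined with the first step, this proves \eqref{eq:signature}.

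I do not anticipate a genuine obstacle: the argument is a short counting identity that holds for any $\sigma\in S_n$ with $\Destop(\sigma)=T$ and $\Desbot(\sigma)=B$, regardless of pattern avoidance. The only subtle point to double-check is the disjointness and covering in the display above, which rests entirely on the trivial fact that every descent top is strictly larger than its paired descent bottom.
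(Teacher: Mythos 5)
Your proposal is correct and follows essentially the same route as the paper: both reduce the left-hand side to the count of descents $\sigma(b)\sigma(b+1)$ with $\sigma(b+1)<i<\sigma(b)$, and both obtain that count as $|B_{<i}|-|T_{\le i}|$ by observing that every descent whose top is at most $i$ automatically has its bottom below $i$. Your write-up of the inclusion/disjointness of the two sets of descent pairs is just a more formal phrasing of the paper's pigeonhole-style sentence.
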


\begin{proof}
The sum $(2_i\underline{31})\sigma+(\underline{31}2_i)\sigma$ counts subsequences $iba$ and $bai$ of $\sigma$ such that $a<i<b$ and $a$ immediately follows $b$. Consider the number of descents $ba$ such that $a<i<b$. The number of descent bottoms $a$ such that $a<i$ is $|B_{<i}|$. Of those descent bottoms, the ones with the corresponding descent tops $b\le i$ do not contribute to occurrences of either $(2_i\underline{31})\sigma$ or $(\underline{31}2_i)\sigma$. Thus, the number of descents $ba$ such that $a<i<b$ is $|B_{<i}|-|T_{\le i}|=\sg_{n,(T,B)}(i)-1$. Moreover, these descents partition the rest of $\sigma$ (in one-line notation) into $\sg_{n,(T,B)}(i)$ (possibly empty) blocks, and $i$ may occur in any one of these blocks. Finally, note that if $i$ occurs in block $c_i\in[\sg_{n,(T,B)}(i)]$ from the left, then 
\begin{equation} \label{eq:weight}
(2_i\underline{31})\sigma=\sg_{n,(T,B)}(i)-c_i, \qquad (\underline{31}2_i)\sigma=c_i-1. \qedhere
\end{equation}
\end{proof}

\begin{theorem} \label{thm:dist}
For any integer $n\ge 0$ and any descent matching $(T,B)$, we have the following:
\begin{equation} \label{eq:dist}
\sum_{\substack{\sigma\in S_n\\ \Destop(\sigma)=T\\ \Desbot(\sigma)=B}}{\prod_{i=1}^{n}\left(p_i^{(2_i\underline{31})\sigma}q_i^{(\underline{31}2_i)\sigma}\right)}=\prod_{i=1}^{n}[\sg_{n,(T,B)}(i)]_{p_i,q_i}.
\end{equation}
\end{theorem}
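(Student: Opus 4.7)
The plan is to derive Theorem~\ref{thm:dist} from Lemma~\ref{lem:signature}, specifically from the refinement \eqref{eq:weight} appearing inside its proof. That refinement says: if the element $i$ lies in the $c_i$-th of the $\sg_{n,(T,B)}(i)$ blocks determined by the straddler descents for $i$, then $(2_i\underline{31})\sigma = \sg_{n,(T,B)}(i) - c_i$ and $(\underline{31}2_i)\sigma = c_i - 1$. Hence the weight of $\sigma$ in the left-hand side factors as $\prod_{i=1}^{n} p_i^{\sg_{n,(T,B)}(i)-c_i(\sigma)}\, q_i^{c_i(\sigma)-1}$.

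Consequently, the theorem reduces to the claim that the map
\[
\Xi : \sigma \;\longmapsto\; (c_1(\sigma), c_2(\sigma), \ldots, c_n(\sigma))
\]
is a bijection from $\{\sigma \in S_n : \Destop(\sigma) = T,\ \Desbot(\sigma) = B\}$ onto the product set $\prod_{i=1}^{n} [\sg_{n,(T,B)}(i)]$. Indeed, once $\Xi$ is known to be a bijection,
\[
\sum_{\sigma} \prod_{i=1}^{n} p_i^{\sg_{n,(T,B)}(i)-c_i(\sigma)}\, q_i^{c_i(\sigma)-1} = \sum_{(c_1,\ldots,c_n)} \prod_{i=1}^{n} p_i^{\sg_{n,(T,B)}(i)-c_i}\, q_i^{c_i-1} = \prod_{i=1}^{n} [\sg_{n,(T,B)}(i)]_{p_i,q_i}.
\]

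To establish that $\Xi$ is a bijection I would proceed by induction on $n$, distinguishing two cases based on the role of the largest value $n$. Since $n \in \overline{B}$ always, either $n \in \overline{T}$ (in which case $n$ must occupy position $n$ of $\sigma$; here $\sg_{n,(T,B)}(n) = 1 = c_n$, and removing $n$ yields a permutation in $S_{n-1}$ with the same descent matching $(T,B)$ and unchanged $\sg$-values and $c_i$-values for $i<n$, so the inductive hypothesis applies directly), or $n \in T$ (in which case $n$ is a descent top immediately followed in $\sigma$ by some $b^{*} \in B$, where $b^{*}$ depends on $\sigma$). In the latter case I would stratify the sum over $\sigma$ by the admissible choice of $b^{*}$, show that each choice leads, after removing the pair $(n,b^{*})$ and renumbering, to a descent matching on a smaller index set whose $\sg$-values match up with what remains, and then invoke the inductive hypothesis.

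The main obstacle will be the bookkeeping in the case $n \in T$: identifying which $b^{*}$ are admissible for a given $(T,B)$, tracking how the descent matching reduces after the removal, and checking that the contributions summed over admissible $b^{*}$ reassemble to the factor $\prod_{i=1}^{n}[\sg_{n,(T,B)}(i)]_{p_i,q_i}$. An equivalent and possibly cleaner route is to construct the inverse of $\Xi$ directly by an insertion algorithm that builds $\sigma$ one entry at a time in decreasing order of value, using the prescribed tuple $(c_1,\ldots,c_n)$ to pick the insertion slot at each step; the crux is then to show that at step $i$ there are exactly $\sg_{n,(T,B)}(i)$ slots that keep the current partial sequence extendable to a full permutation with the target descent matching, corresponding bijectively to the admissible values of $c_i$.
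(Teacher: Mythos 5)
Your reduction of the theorem to the bijectivity of $\Xi:\sigma\mapsto(c_1,\dots,c_n)$ is correct and is exactly the first step of the paper's proof: by \eqref{eq:weight} the weight of $\sigma$ factors through the block indices $c_i$, so it suffices to show that $(T,B)$ together with any tuple $c\in\prod_i[\sg_{n,(T,B)}(i)]$ determines a unique $\sigma$. The problem is that this bijectivity is the entire content of the proof, and you do not establish it; you name two possible routes and explicitly defer the hard part of each (``the main obstacle will be the bookkeeping,'' ``the crux is then to show that at step $i$ there are exactly $\sg_{n,(T,B)}(i)$ slots''). As written, the proposal proves nothing beyond what Lemma~\ref{lem:signature} already gives.

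Moreover, the specific top-down induction you sketch has a genuine obstruction, not just bookkeeping. When $n\in T$ and you delete the pair $(n,b^*)$, the effect on the remaining permutation depends on more than $(T,B)$ and $b^*$: if $b^*\in T\cap B$, deleting $b^*$ destroys the descent that $b^*$ tops, and whether the predecessor of $n$ becomes a new descent top after the deletion depends on its value relative to the successor of $b^*$. Also $\sg_{n,(T,B)}(i)=|B_{<i}|-|T_{\le i}|+1$ changes for all $i>b^*$ once $b^*$ is removed from $B$, so the inductive hypothesis does not ``apply directly'' to the remaining values. The paper sidesteps all of this by building $\sigma$ bottom-up rather than top-down: it runs a modified Fran\c{c}on--Viennot insertion, inserting values $i=1,2,\dots,n$ in \emph{increasing} order into a string with placeholders $\lac$, where the replacement rule ($\lac\mapsto\lac i\lac$, $i\lac$, $\lac i$, or $i$) is dictated by which of $B\setminus T$, $[n]\setminus(B\cup T)$, $B\cap T$, $T\setminus B$ contains $i$; the count of available placeholders at step $i$ is exactly $\sg_{n,(T,B)}(i)$, and the inverse is an explicit peeling-off of $n,n-1,\dots,1$. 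If you want to complete your argument, the cleanest fix is to adopt this increasing-value insertion (or to prove, with full case analysis, the slot count for your decreasing-value variant), rather than the deletion-of-the-maximum induction.
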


\begin{proof}
For $n=0$, the identity \eqref{eq:dist} just says that $1=1$, since the products are empty. Let $n\ge 1$ and let $h=(h_1,\dots,h_n)=\sg_{n,(T,B)}$. We will show that every permutation $\sigma$ is determined uniquely by the descent matching $(T,B)=(\Destop,\Desbot)\sigma$ and the $n$-tuple $c=(c_i)_{1\le i\le n}$ such that $c_i\in[h_i]$ for all $i\in[n]$. Then
\[
\sum_{\substack{\sigma\in S_n\\ \Destop(\sigma)=T\\ \Desbot(\sigma)=B}}{\prod_{i=1}^{n}\left(p_i^{(2_i\underline{31})\sigma}q_i^{(\underline{31}2_i)\sigma}\right)}=\prod_{i=1}^{n}\sum_{\substack{\sigma\in S_n\\ \Destop(\sigma)=T\\ \Desbot(\sigma)=B}}p_i^{(2_i\underline{31})\sigma}q_i^{(\underline{31}2_i)\sigma}=\prod_{i=1}^{n}[h_i]_{p_i,q_i}.
\]

To prove this, we use a slightly modified version of the Fran\c{c}on-Viennot~\cite{FV} bijection. Consider an arbitrary sequence $(c_1,\dots,c_n)$ such that $c_i\in[h_i]$ for each $i\in[n]$. Start with the empty permutation $\varepsilon$, represented by the string $\lac$, and iterate the following step for $i=1,2,\dots,n$: replace $c_i$-th leftmost occurrence of $\lac$ as follows:
\[
\lac \mapsto 
\begin{cases}
\lac i\lac, & \text{ if } i\in B\setminus T,\\
i\lac, & \text{ if } i\notin B\cup T,\\
\lac i, & \text{ if } i\in B\cap T,\\
i, & \text{ if } i\in T\setminus B.
\end{cases}
\]
Finally, delete the remaining $\lac$ to the right of the resulting string to obtain the unique permutation $\sigma\in S_n$ that satisfies \eqref{eq:weight} for all $i\in[n]$. 

Note that in Cases 3 and 4, when $i\in B\cap T$ or $i\in T\setminus B$, the descent for which $i$ is the descent top is not part of the $h_i-1$ descents contained in an instance of $2_i\underline{31}$ or $\underline{31}2_i$. Thus, the number of $\lac$ in those cases before the insertion of $i$ is $h_i+1$, so the rightmost $\lac$ (which is always at the end of the string) cannot be replaced in Cases 3 and 4 (this is the difference from the Fran\c{c}on-Viennot bijection~\cite{FV}). However, in Cases 1 and 2, when $i\in B\setminus T$ or $i\notin B\cup T$, the rightmost $\lac$ may be replaced, and the number of $\lac$ in those cases is $h_i$. In all the cases, the rightmost character of the string remains $\lac$, until it is deleted after step $n$. See Example~\ref{ex:dist} for an instance of this mapping.

Note also that, the number of occurrences $2_i\underline{31}$ or $\underline{31}2_i$ is as in Equation \eqref{eq:weight}, since every $\lac$ except the rightmost one corresponds to a descent in the resulting permutation $\sigma$. This also means that, at every step, there is a $\lac$ to the right of all inserted letters.

It is straightforward to see that this mapping is a bijection. Indeed, given a permutation $\sigma$, we can recover its descent matching $(T,B)$ and the $n$-tuple $c=(c_1,\dots,c_n)$ and the corresponding replacement cases as follows. Append $\lac$ to the right of $\sigma$, then for each $i$ from $n$ down to $1$, do the following. Let $s_i$ be the substring of $\sigma$ defined as follows:
\[
s_i =
\begin{cases}
\lac i\lac, & \text{ if $i$ is adjacent to $\lac$ on the left and on the right},\\
i\lac, & \text{ if $i$ is adjacent to $\lac$ only on the right},\\
\lac i, & \text{ if $i$ is adjacent to $\lac$ only on the left},\\
i, & \text{ if $i$ is not adjacent to $\lac$ on either side}.
\end{cases}
\]
Replace $s_i$ with $\lac$ and let $c_i=k$ if the new $\lac$ is the $k$-th leftmost in the resulting string. (In Example~\ref{ex:dist}, this amounts to proceeding from bottom to top of the second table.)

Finally, note that the insertion of each $i\in[n]$ contributes the factor of
\[
\sum_{c_i=1}^{h_i} p_i^{h_i-c_i}q_i^{c_i-1}=\frac{p_i^{h_i}-q_i^{h_i}}{p_i-q_i}=[h_i]_{p_i,q_i}
\]
to the righthand side of Equation~\eqref{eq:dist}.
\end{proof}

The above proof lets us make the following observation. As before, let $h_i=\sg_{n,(T,B)}(i)$ for $i=1,\dots,n$, $h=(h_1,\dots,h_n)$, and let $w=(w_1,\dots,w_n)$ be a word on the alphabet $\{u,l^*,l_*,d\}$ such that
\[
w_i=
\begin{cases}
u, & \text{ if } i\in B\setminus T,\\
l^*, & \text{ if } i\notin B\cup T,\\
l_*, & \text{ if } i\in B\cap T,\\
d, & \text{ if } i\in T\setminus B.
\end{cases}
\]
Then the triple $(w,h,c)$ is a \emph{restricted Laguerre history} (see, for example, Chen and Fu \cite{CF} for all relevant definitions). This corresponds to labeled \emph{2-Motzkin path} of length $n$ from $(0,0)$ to $(n,0)$ with unit steps $u=(1,1)$, $l^*=(1,0)$, $l_*=(1,0)$, $d=(1,-1)$. Let the \emph{height} of each edge $w_i$ be the second coordinate of its left endpoint. Then for each edge $w_i\in\{u,l^*,l_*,d\}$,
\[
h_i=
\begin{cases}
\height(w_i)+1, & \text{ if } w_i\in\{u,l^*\},\\
\height(w_i), & \text{ if } w_i\in\{l_*,d\},
\end{cases}
\]
and for each $i=1,\dots,n$, the edge $w_i$ is labeled with some $c_i\in[h_i]$.

Theorem~\ref{thm:dist} implies, in particular, that statistics $(2\underline{31}, \underline{31}2)$ and $(\underline{31}2, 2\underline{31})$ are equidistributed on the set $\{\sigma\in S_n\mid \Destop(\sigma)=T,\ \Desbot(\sigma)=B\}$ for any descent matching $(T,B)$. Moreover, avoiding the vincular pattern $2\underline{31}$ (respectively, $\underline{31}2$) is equivalent to avoiding the classical pattern $231$ (respectively, $312$) as in Section \ref{sec:res}, which implies Theorem~\ref{thm:231-312-destop-desbot} as a corollary. Theorem~\ref{thm:dist} also generalizes \cite[Theorem 2.4]{BJVS}, which is a special case of Theorem~\ref{thm:dist} for $n=2m$, $T=\{2,4,\dots,2m\}$, and $p_i=q_i=1$ for all $i\in[n]$.

Letting $p_i=p$, $q_i=q$ for all $i\in[n]$, we obtain the following immediate corollary.
\begin{corollary} \label{cor:dist}
For any integer $n\ge 0$ and any descent matching $(T,B)$, we have the following:
\begin{equation} \label{eq:dist-same}
\sum_{\substack{\sigma\in S_n\\ \Destop(\sigma)=T\\ \Desbot(\sigma)=B}}{p^{(2\underline{31})\sigma}q^{(\underline{31}2)\sigma}}=\prod_{i=1}^{n}[\sg_{n,(T,B)}(i)]_{p,q}.
\end{equation}
\end{corollary}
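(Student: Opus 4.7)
The plan is to derive \eqref{eq:dist-same} as a one-line specialization of Theorem \ref{thm:dist}. Concretely, I would set $p_i=p$ and $q_i=q$ for every $i\in[n]$ in identity \eqref{eq:dist} and simplify both sides.

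On the right-hand side, the substitution replaces each factor $[\sg_{n,(T,B)}(i)]_{p_i,q_i}$ by $[\sg_{n,(T,B)}(i)]_{p,q}$, and the product matches the right-hand side of \eqref{eq:dist-same} with no further manipulation. On the left-hand side, the inner product for a fixed $\sigma$ becomes
\[
\prod_{i=1}^{n}\left(p^{(2_i\underline{31})\sigma}\,q^{(\underline{31}2_i)\sigma}\right) = p^{\sum_{i=1}^{n}(2_i\underline{31})\sigma}\,q^{\sum_{i=1}^{n}(\underline{31}2_i)\sigma}.
\]
I would then invoke the decompositions $(2\underline{31})=\sum_{i\ge 1}(2_i\underline{31})$ and $(\underline{31}2)=\sum_{i\ge 1}(\underline{31}2_i)$ recorded at the start of Section \ref{sec:dist}, noting that the boundary terms $i=1$ and $i=n$ contribute zero to each sum so the range $i\in[n]$ agrees with $i\ge 1$. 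This collapses the exponents to $(2\underline{31})\sigma$ and $(\underline{31}2)\sigma$, giving the summand $p^{(2\underline{31})\sigma}q^{(\underline{31}2)\sigma}$ of \eqref{eq:dist-same}.

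Since Theorem \ref{thm:dist} is already available, there is no substantive obstacle here: the corollary is a direct specialization of the refined multivariate distribution to its bivariate coarsening. The only bookkeeping is matching the ranges of the inner sums over $i$, which is transparent from the definitions of $(2\underline{31})$ and $(\underline{31}2)$ as sums of the per-position statistics.
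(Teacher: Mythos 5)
Your proposal is correct and is exactly the paper's argument: the corollary is stated there as an immediate specialization of Theorem~\ref{thm:dist} obtained by setting $p_i=p$ and $q_i=q$ for all $i\in[n]$, with the exponents collapsing via $(2\underline{31})=\sum_{i\ge 1}(2_i\underline{31})$ and $(\underline{31}2)=\sum_{i\ge 1}(\underline{31}2_i)$. Your extra remark about the boundary terms $i=1$ and $i=n$ contributing zero is a harmless bit of bookkeeping the paper leaves implicit.
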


\begin{example} \label{ex:dist}
Let $T=\{2,5,8,9\}$, $B=\{1,2,3,7\}$, and $n=9$, as in Example~\ref{ex:destop-desbot}. Then we have
\begin{center}
\begin{tabular}{c||c|c|c|c|c|c|c|c|c}
  $i$                & 1 & 2 & 3 & 4 & 5 & 6 & 7 & 8 & 9 \\ \hline\hline
  $|B_{<i}|$         & 0 & 1 & 2 & 3 & 3 & 3 & 3 & 4 & 4 \\ \hline
  $|T_{\le i}|$      & 0 & 1 & 1 & 1 & 2 & 2 & 2 & 3 & 4\\ \hline
  $\sg_{n,(T,B)}(i)$   & 1 & 1 & 2 & 3 & 2 & 2 & 2 & 2 & 1
\end{tabular}
\end{center}
so
\[
\begin{split}
\sum_{\substack{\sigma\in S_9\\ \Destop(\sigma)=\{2,5,8,9\}\\ \Desbot(\sigma)=\{1,2,3,7\}}}\!\!\!\!\!\!\!\!\!\!\!\!\!\!\!\!{p^{(2\underline{31})\sigma}q^{(\underline{31}2)\sigma}}&=\prod_{i=1}^{9}[\sg_{9,(\{2,5,8,9\},\{1,2,3,7\})}(i)]_{p,q}\\
&=[1]_{p,q}[1]_{p,q}[2]_{p,q}[3]_{p,q}[2]_{p,q}[2]_{p,q}[2]_{p,q}[2]_{p,q}[1]_{p,q}\\
&=1\cdot 1\cdot(p+q)(p^2+pq+q^2)(p+q)(p+q)(p+q)(p+q)\cdot 1\\
&=p^7 + 6 p^6 q + 16 p^5 q^2 + 25 p^4 q^3 + 25 p^3 q^4 + 16 p^2 q^5 + 6 p q^6 + q^7.
\end{split}
\]
Note that the permutations $\sigma=453687921\in\Av_9(312)$ and $\pi=921534687\in\Av_9(231)$ in Example~\ref{ex:destop-desbot} correspond to the terms $p^7$ and $q^7$, respectively, or equivalently, to $c=(1,\dots,1)$ and $c=h=\sg_{n,(T,B)}$, respectively.

Finally, let $h=\sg_{9,(\{2,5,8,9\},\{1,2,3,7\})}=(1,1,2,3,2,2,2,1)$ and choose an $n$-tuple $c=(c_i)_{1\le i\le 9}=(1,1,1,2,1,2,1,2,1)\in\prod_{i=1}^{9}[h_i]$. Then the quadruple $(n,T,B,c)$, where $n=9$, $T=\{2,5,8,9\}$, $B=\{1,2,3,7\}$, corresponds to permutation $\sigma=534978216\in S_9$ obtained as follows (with the block containing $i$ that replaced the $c_{i-1}$-th $\lac$ in iteration $i$ marked red):
\begin{center}
\begin{tabular}{c||c|c|c|c||c|c|c|c|c}
$i$ & $|B_{<i}|$ & $|T_{\le i}|$ & $w_i$ & $h_i$ & $c_i$ & $\sigma\!\upharpoonright_{[i]}$ & $(2_i\underline{31})\sigma$ & $(\underline{31}2_i)\sigma$ & $h_i-1$  \\ \hline\hline
0 &  &  &  &  &   & $\lac$ &  & \\ \hline
1 & 0 & 0 & $u$ & 1 & 1 & $\mathcolor{red!90!black}{\lac 1\lac}$ & 0 & 0 & 0\\ \hline
2 & 1 & 1 & $l_*$ & 1 & 1 & $\mathcolor{red!90!black}{\lac 2}1\lac$ & 0 & 0 & 0\\ \hline
3 & 2 & 1 & $u$ & 2 & 1 & $\mathcolor{red!90!black}{\lac 3\lac} 21\lac$ & 1 & 0 & 1\\ \hline
4 & 3 & 1 & $l^*$ & 3 & 2 & $\lac 3\mathcolor{red!90!black}{4\lac} 21\lac$ & 1 & 1 & 2\\ \hline
5 & 3 & 2 & $d$ & 2 & 1 & $\mathcolor{red!90!black}{5}34\lac 21\lac$ & 1 & 0 & 1\\ \hline
6 & 3 & 2 & $l^*$ & 2 & 2 & $534\lac 21\mathcolor{red!90!black}{6\lac}$ & 0 & 1 & 1\\ \hline
7 & 3 & 2 & $u$ & 2 & 1 & $534\mathcolor{red!90!black}{\lac 7\lac} 216\lac$ & 1 & 0 & 1\\ \hline
8 & 4 & 3 & $d$ & 2 & 2 & $534\lac 7\mathcolor{red!90!black}{8}216\lac$ & 0 & 1 & 1\\ \hline
9 & 4 & 4 & $d$ & 1 & 1 & $534\mathcolor{red!90!black}{9}78216\lac$ & 0 & 0 & 0\\ 
\end{tabular}
\end{center}
Here $\sigma\!\!\upharpoonright_{[i]}$ denotes the subsequence of $\sigma$ on values in $[i]$, together with $\lac$ indicating where greater values will be inserted (except for the rightmost $\lac$, where greater values simply may be inserted).

Thus, the restricted Laguerre history
\[
\begin{tikzpicture}[scale=0.7]
\node[left] at (0,1) {$(w,h,\textcolor{red}{c})=$};
\draw[help lines] (0,0) grid (9,2);
\draw [very thick] (0,0) -- (1,1) -- (2,1) -- (3,2) -- (4,2) -- (5,1) -- (6,1) -- (7,2) -- (8,1) -- (9,0);
\node[above] at (0.25,0.25) {\small $u$};
\node[above] at (1.5,0.85) {\small $l_*$};
\node[above] at (2.25,1.25) {\small $u$};
\node[above] at (3.5,2) {\small $l^*$};
\node[above] at (4.75,1.25) {\small $d$};
\node[above] at (5.5,0.9) {\small $l^*$};
\node[above] at (6.25,1.25) {\small $u$};
\node[above] at (7.75,1.25) {\small $d$};
\node[above] at (8.75,0.25) {\small $d$};

\node[below] at (0.75,0.75) {\small \textcolor{red}{$1$}};
\node[below] at (1.5,1.1) {\small \textcolor{red}{$1$}};
\node[below] at (2.75,1.75) {\small \textcolor{red}{$1$}};
\node[below] at (3.5,2.1) {\small \textcolor{red}{$2$}};
\node[below] at (4.25,1.75) {\small \textcolor{red}{$1$}};
\node[below] at (5.5,1.1) {\small \textcolor{red}{$2$}};
\node[below] at (6.75,1.75) {\small \textcolor{red}{$1$}};
\node[below] at (7.25,1.75) {\small \textcolor{red}{$2$}};
\node[below] at (8.25,0.75) {\small \textcolor{red}{$1$}};
\end{tikzpicture}
\]
corresponds to the permutation $\sigma=534978216\in S_9$, whose permutation diagram is given below.
\[
\begin{tikzpicture}[scale=0.5]
\draw[help lines] +(0,0) grid +(8,8);


\foreach \x in {1,2,3,4,5,6,7,8,9} 
{
\node[below] at (\x-1,0) {\small $\x$};
}

\foreach \x in {1,2,3,4,5,6,7,8,9} 
{
\node[left] at (0,\x-1) {\small $\x$};
}

\foreach \x/\y in {1/5,2/3,3/4,4/9,5/7,6/8,7/2,8/1,9/6}
{
\filldraw (\x-1,\y-1) circle (4pt);
}

\draw [very thick] ++(-1,-1) 
+(1,5) -- +(2,3)
+(4,9) -- +(5,7) 
+(6,8) -- +(7,2) -- +(8,1) 
;
\end{tikzpicture}
\]
\end{example}

\section{$\Destop$-Wilf and $\Desbot$-Wilf equivalence in $S_4$} \label{sec:conj}

We have several conjectures regarding $\Destop$-Wilf and $\Desbot$-Wilf equivalence for patterns of length $4$. Their principal motivation comes from the following. A \emph{Dumont permutation of the first kind} is a permutation $\sigma$ of an even length $2n$ such that $\Destop(\sigma)=\{2i\mid i\in[n]\}$. Jones~\cite{Jones}, Burstein and Jones~\cite{BJ21}, and Archer and Lauderdale~\cite{AL19} together conjectured Wilf-equivalences of patterns of length 4 on Dumont permutations of the first kind. We generalize these conjectures by claiming that those are exactly the nontrivial $\Destop$-Wilf equivalences for patterns of length $4$ on all permutations.

\begin{conjecture} \label{conj:destop}
The non-singleton $\Destop$-Wilf equivalence classes in $S_4$ are:
\begin{itemize}
\item $\{1243,3412\}$,

\item $\{1423,2413\}$,

\item $\{2143,3421\}$,

\item $\{2314,3124\}$,

\item $\{2431,3142,3241,4132\}$.

\end{itemize}
\end{conjecture}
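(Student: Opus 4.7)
The plan has two components: establishing each of the five claimed $\Destop$-Wilf equivalences, and ruling out any additional ones. For the negative direction, I would enumerate $\Av_n(\pi)$ for each $\pi\in S_4$ and small $n$ (say $n\le 7$), compute the multivariate polynomial $\sum_{\sigma\in\Av_n(\pi)}\prod_{i\in\Destop(\sigma)}x_i$, and verify that these polynomials already distinguish patterns from different claimed classes. Since any $\Destop$-Wilf equivalence implies ordinary Wilf equivalence, only patterns within the same ordinary Wilf class of $S_4$ need to be compared, so the case-checking is modest.

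For the positive direction, the strategy is to construct explicit bijections modeled on Theorem~\ref{thm:destop-132-231-312}. For each pattern $\pi\in S_4$, there is a recursive decomposition of $\Av(\pi)$ in terms of smaller $\pi$-avoiders, typically keyed on the position of the maximum entry or some distinguished extremal entry; the aim is to match such decompositions across patterns within a class so that, at each step of the recursion, the entries becoming descent tops in the two images coincide as a set. The pair $\{1423,2413\}$ is the easiest case, since Bloom's $\Des$-preserving bijection~\cite{Bloom14} between $\Av(1423)$ and $\Av(2413)$ is automatically $\Destop$-preserving.

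The main obstacle is that, in each of the other four classes, the patterns have distinct descent sets: $\Des(1243)=\{3\}\ne\{2\}=\Des(3412)$; $\Des(2143)=\{1,3\}\ne\{2,3\}=\Des(3421)$; $\Des(2314)=\{2\}\ne\{1\}=\Des(3124)$; and in the four-element class, $\Des(2431)=\{2,3\}$ while $\Des(3142)=\Des(3241)=\Des(4132)=\{1,3\}$. Consequently, the required bijections cannot be $\Des$-preserving: they must shift descent \emph{positions} while preserving only the set of values at those positions, a qualitatively new feature beyond Section~\ref{sec:res} and Bloom's framework. For the four-element class, Bloom~\cite{Bloom14} already supplies $3142\stackrel{\Des}{\leftrightsquigarrow}3241\stackrel{\Des}{\leftrightsquigarrow}4132$ (which is automatically $\Destop$-preserving), so the crux is constructing a $\Destop$-preserving bijection $\Av_n(2431)\to\Av_n(3142)$. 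Since $(\Destop,\Desbot)$ is \emph{not} conjectured to be jointly equidistributed in this case, such a bijection must in general fail to preserve $\Desbot$; a natural approach is, for each admissible $T$, to build fiberwise bijections between $\{\sigma\in\Av_n(2431):\Destop(\sigma)=T\}$ and $\{\pi\in\Av_n(3142):\Destop(\pi)=T\}$ by reorganizing the descent-run structure value-by-value in the spirit of Lemma~\ref{lem:des-match} and the proof of Theorem~\ref{thm:231-312-destop-desbot}. Even establishing equal cardinalities of these fibers, independent of exhibiting an explicit bijection, is likely the deepest technical step.
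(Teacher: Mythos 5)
First, a point of calibration: the statement you are proving is stated in the paper as a \emph{conjecture}. The paper offers no proof of it --- only computational verification for avoiders of length $n\le 10$ via PermLab, plus a pointer to Zhou, Zang, and Yan, who subsequently proved the equivalences within $\{3142,3241,4132\}$ and between $2413$ and $1423$ (Theorem~\ref{thm:zzy}); the pairs $\{1243,3412\}$, $\{2143,3421\}$, $\{2314,3124\}$, and the membership of $2431$ in the last class are not settled in the paper. So there is no ``paper proof'' to match your plan against, and a plan whose hardest steps are left as ``likely the deepest technical step'' is, appropriately, not a proof. Your negative direction (finite computation of the $\Destop$ generating polynomials for small $n$, restricted to ordinary Wilf classes) is sound in principle and is essentially what the verification for $n\le 10$ amounts to.

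The concrete error is your repeated claim that a $\Des$-preserving bijection is ``automatically $\Destop$-preserving.'' It is not: $\Des(\sigma)$ records the \emph{positions} of descents, while $\Destop(\sigma)$ records the \emph{values} sitting at those positions, and knowing $\Des(\sigma)=\Des(\Theta(\sigma))$ says nothing about the values $\sigma(i)$ versus $\Theta(\sigma)(i)$ for $i\in\Des(\sigma)$. (Contrast this with the proof of Theorem~\ref{thm:destop-132-231-312}, where the map $\Phi$ had to be defined differently from the $\Des$-preserving map $\phi$ precisely because preserving positions and preserving values impose different constraints.) Worse, your claim is inconsistent with the very conjecture you are trying to prove: Bloom shows that $1423$, $2314$, $2413$ are all $\Des$-Wilf equivalent, so if $\Des$-preservation implied $\Destop$-preservation, all three would lie in one $\Destop$-Wilf class, whereas the conjecture asserts $\{1423,2413\}$ and $\{2314,3124\}$ are distinct classes. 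So the one case you flag as ``easiest'' ($1423\stackrel{\Destop}{\leftrightsquigarrow}2413$) does not follow from Bloom, and the reduction of the four-element class to the single edge $2431\leftrightarrow 3142$ also collapses. Your structural observation that the required bijections must move descent positions while preserving descent-top values is correct and is indeed the essential difficulty, but as it stands every positive case of the conjecture remains open in your write-up.
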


Taking the reverse complement of the above patterns yields an equivalent conjecture:
\begin{conjecture} \label{cor:desbot}
The non-singleton $\Desbot$-Wilf equivalence classes in $S_4$ are:
\begin{itemize}
\item $\{2134,3412\}$,

\item $\{2314,2413\}$,

\item $\{2143,4312\}$,

\item $\{1423,1342\}$,

\item $\{3142,3241,4132,4213\}$.

\end{itemize}
\end{conjecture}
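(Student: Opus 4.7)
My plan is to invoke the reverse-complement symmetry recorded in Section~\ref{sec:prelim}: since $\sigma \stackrel{\Destop}{\leftrightsquigarrow} \tau$ if and only if $\sigma^{rc} \stackrel{\Desbot}{\leftrightsquigarrow} \tau^{rc}$, and a direct check confirms that the five classes listed in Conjecture~\ref{cor:desbot} are the pointwise reverse complements of the five classes in Conjecture~\ref{conj:destop}, it suffices to prove the latter. The proof then splits into two parts: (a) constructing $\Destop$-preserving bijections within each conjectured class, and (b) ruling out all further $\Destop$-Wilf equivalences in $S_4$.

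For part (a), the four two-element classes $\{1243,3412\}$, $\{1423,2413\}$, $\{2143,3421\}$, $\{2314,3124\}$ should each admit a recursive bijection modeled on the maps $\Phi$ and $\Psi$ in the proof of Theorem~\ref{thm:destop-132-231-312}: decompose each avoider via the position of a distinguished extreme entry (typically the maximum, the minimum, or the last letter), recurse on sub-blocks whose value sets are intervals, and verify that any descent top created at the recursion root is matched between the two sides. An important caveat is that Bloom's $\Des$-preserving bijections~\cite{Bloom14} for $\{1423,2314,2413\}$ and $\{3142,3241,4132\}$ do not respect the conjectured classes here (they cut across them), so genuinely new bijections must be designed rather than inherited. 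For the four-element class $\{2431,3142,3241,4132\}$ I would first adapt Bloom's decomposition to produce $\Destop$-preserving bijections among $\{3142,3241,4132\}$, and then---the substantive new ingredient---exhibit a $\Destop$-preserving bijection between $\Av(2431)$ and, say, $\Av(3142)$, most plausibly via a labeled generating-tree isomorphism in which each node's label records the current descent top set together with the positions at which the next largest entry may be inserted without creating a forbidden pattern.

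For part (b), the inequivalences reduce to a finite computation: any pair $\pi_1,\pi_2$ with $\Destop(\pi_1) \ne \Destop(\pi_2)$ already fails equivalence at $n=4$, so only pairs inside each $\Destop$-group need to be distinguished. Computing the multivariate polynomial $\sum_{\sigma\in\Av_n(\pi)}\prod_{t\in\Destop(\sigma)}x_t$ for $n$ up to about $6$ and verifying that distinct conjectured classes produce distinct polynomials then suffices. The main obstacle is the four-element class, and specifically the $\Destop$-preserving bijection involving $\Av(2431)$: because $\Av(2431)$ lacks the oscillating-staircase structure that underpins the treatment of $3142, 3241, 4132$, a qualitatively different argument---beyond the inflation and block-decomposition techniques used elsewhere in the paper---is almost certainly required, and is the natural point at which to import the generating-tree or insertion-encoding machinery of the Zhou--Zang--Yan proof referenced in the abstract.
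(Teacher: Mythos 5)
The statement you are asked about is labeled a \emph{conjecture}, and the paper offers no proof of it: the only argument the paper gives is precisely your opening reduction, namely that applying reverse complement to the classes of Conjecture~\ref{conj:destop} yields the classes listed here, so that the two conjectures are equivalent via $\sigma \stackrel{\Destop}{\leftrightsquigarrow}\tau \iff \sigma^{rc}\stackrel{\Desbot}{\leftrightsquigarrow}\tau^{rc}$. That part of your proposal is correct and matches the paper exactly (e.g.\ $(1243)^{rc}=2134$, $(3412)^{rc}=3412$, $(2431)^{rc}=4213$, and $\{3142,3241,4132\}$ is closed under $rc$). Your part (b) is also sound in principle: inequivalence of two patterns is witnessed by a single $n$ at which the $\Destop$-distributions differ, so a finite computation can legitimately rule out all other equivalences, provided the computation is actually carried out and the polynomials do differ.

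The genuine gap is part (a). You do not construct any of the claimed $\Destop$-preserving bijections; you only assert that the two-element classes ``should each admit'' a recursive bijection modeled on $\Phi$ and $\Psi$, and for the four-element class you defer the crucial $\Av(2431)\leftrightarrow\Av(3142)$ correspondence to a hypothetical generating-tree argument. Note moreover that the Zhou--Zang--Yan results quoted in Theorem~\ref{thm:zzy} cover only the equivalences among $3142$, $3241$, $4132$ and between $2413$ and $1423$; they say nothing about $2431$, nor about the classes $\{1243,3412\}$, $\{2143,3421\}$, $\{2314,3124\}$, so the machinery you propose to import does not in fact close the hardest case. In short, after the (correct) reverse-complement reduction, what you have is a research plan rather than a proof --- which is consistent with the fact that the paper itself leaves the statement as a conjecture, but it means the statement remains unproved by your argument.
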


Notice that Conjectures~\ref{conj:destop} and~\ref{cor:desbot} imply that $3142$, $3241$, $4132$ are both $\Destop$-Wilf and $\Desbot$-Wilf equivalent. For those patterns, we have an even stronger conjecture.

\begin{conjecture} \label{conj:destop-desbot}
Patterns $3142$, $3241$, $4132$ are $(\Destop,\Desbot)$-Wilf equivalent.
\end{conjecture}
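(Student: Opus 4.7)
My plan is to refine Bloom's $\Des$-Wilf equivalence by tracking the actual \emph{values} that serve as descent tops and bottoms, not only their positions. Concretely, I would fix a descent matching $(T,B)$ with $n\ge\max T$ and show that for each $\pi\in\{3142,3241,4132\}$, the cardinality of the refined set
\[
\Av_n(\pi;T,B)=\{\sigma\in\Av_n(\pi)\mid \Destop(\sigma)=T,\ \Desbot(\sigma)=B\}
\]
is the same, by constructing explicit bijections modeled on the argument for Theorem~\ref{thm:231-312-destop-desbot} and on the Laguerre-history encoding in the proof of Theorem~\ref{thm:dist}.

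The first step is to exploit that once $(T,B)$ is fixed, the descent runs of any $\sigma\in S_n$ with $(\Destop,\Desbot)\sigma=(T,B)$ are already determined by the pairing from Lemma~\ref{lem:des-match} together with the singleton descent runs drawn from $\overline{T}\cap\overline{B}$. What varies across permutations sharing the same $(T,B)$ is only the left-to-right arrangement of these descent runs, measured by $\sg_{n,(T,B)}$. So the task reduces to characterizing, for each of the three patterns, which arrangements of descent runs avoid $\pi$.

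The second step is to encode each $\sigma\in S_n$ with $(\Destop,\Desbot)\sigma=(T,B)$ as a labeled object analogous to the restricted Laguerre history of Section~\ref{sec:dist}, where the label at step $i$ ranges over $[\sg_{n,(T,B)}(i)]$, and to translate $\pi$-avoidance into constraints on admissible label sequences. The crucial assertion is that the resulting constrained sets have the same cardinality for all three patterns. For $\Av_n(3142)\leftrightarrow\Av_n(4132)$, I expect a relatively local bijection that swaps certain entries within ascent runs and leaves the descent-run skeleton intact; equivalently, this should amount to rewriting the Laguerre-history labels according to a simple involution at each height. For $\Av_n(3241)$, I anticipate needing a more global recursion, decomposing at the position of the entry $n$ (or $1$) and matching the two halves with the corresponding decomposition for $3142$; the key lemma will be a ``bottom-maximum'' characterization of each class analogous to Lemma~\ref{lem:destop-312}, so that the decomposition respects both $\Destop$ and $\Desbot$.

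The main obstacle will be the $3241$ case. Patterns $3142$ and $4132$ share the feature that the two small values appear adjacent to the leading large value, which suggests that in the descent-run encoding they impose essentially the same local constraint and differ only by a reflection; $3241$, however, interposes a different value-ordering and does not fit quite so cleanly into the descent-run-insertion framework. A secondary difficulty is ensuring the resulting bijections are globally well-defined and invertible, since the $\pi$-avoidance constraints couple labels at different heights of the Laguerre history; I expect to handle this by induction on $n$ and to verify, via recursion on the position of $n$, that restricting Bloom's bijections to fibers of $(\Destop,\Desbot)$ gives the desired equinumerosity. As a consistency check, forgetting the actual values and retaining only descent positions should recover Bloom's $\Des$-Wilf equivalence, which would provide a useful partial correctness test throughout the construction.
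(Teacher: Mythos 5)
First, note that the statement you are proving is stated in the paper only as a conjecture: the paper verifies it computationally for $n\le 10$ and attributes its eventual proof to Zhou, Zang, and Yan, who establish the stronger equivalences of Theorem~\ref{thm:zzy}. So there is no proof in the paper to match; your proposal must stand on its own, and as written it does not: it is a research plan in which every key construction is only ``expected'' or ``anticipated,'' and its one concrete structural claim is false.

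The false claim is your first step: that once $(T,B)$ is fixed, the descent runs of any $\sigma\in S_n$ with $(\Destop,\Desbot)\sigma=(T,B)$ are determined by the pairing of Lemma~\ref{lem:des-match}, so that only the left-to-right arrangement of a fixed multiset of runs varies. The paper's own Example~\ref{ex:dist} refutes this: for $T=\{2,5,8,9\}$, $B=\{1,2,3,7\}$, the permutation $\sigma=534978216$ has descent runs $53$, $97$, $821$, whereas the canonical pairing $t_i>b_i$ of Lemma~\ref{lem:des-match} produces runs $521$, $83$, $97$. A smaller example: $34152$ and $34251$ both have $(\Destop,\Desbot)=(\{4,5\},\{1,2\})$ but descent runs $\{41,52\}$ versus $\{42,51\}$. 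This is precisely why the Fran\c{c}on--Viennot encoding of Theorem~\ref{thm:dist} is needed: the label sequence $c$ controls not just where runs go but how tops and bottoms get matched into runs. Your reduction therefore collapses, and with it the premise that $\pi$-avoidance becomes a constraint on arrangements of a fixed run multiset. Beyond this, the actual content of the conjecture --- the bijections for the pairs $3142\leftrightarrow4132$ and $3142\leftrightarrow3241$ --- is never constructed; the proposed fallback of ``restricting Bloom's bijections to fibers of $(\Destop,\Desbot)$'' is unsupported, since Bloom's maps are built to preserve $\Des$ (descent positions) and there is no argument that they respect descent top and bottom values. To turn this into a proof you would need, at minimum, a characterization of $3142$-, $3241$-, and $4132$-avoidance directly in terms of the restricted Laguerre history $(w,h,c)$, and then an explicit cardinality-preserving correspondence between the admissible label sets; none of that is present.
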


Both Conjectures \ref{conj:destop} and \ref{conj:destop-desbot} have been verified for avoiders of length $n\le 10$ with the help of Michael Albert's \emph{PermLab} software, see Albert~\cite{Albert}.

Note that Conjectures~\ref{conj:destop},~\ref{cor:desbot}, and~\ref{conj:destop-desbot} together imply that $\{3142,3241,4132\}$ is the unique non-singleton $(\Destop,\Desbot)$-Wilf equivalence class in $S_4$. Conjecture~\ref{conj:destop-desbot} also parallels the result of Bloom~\cite{Bloom14} that patterns $3142$, $3241$, $4132$ are $\Des$-Wilf equivalent.


%
%
%
%

Two more cases of Conjecture~\ref{conj:destop} appear to be part of families of $\Destop$-Wilf equivalences, which we conjecture to be, in fact, \emph{shape}-Wilf equivalences, defined below following Stankova and West~\cite{SW}.

Let $\lambda=(\lambda_1,\lambda_2,\dots,\lambda_k)$, where $\lambda_1\ge \lambda_2\ge\dots\ge \lambda_k>0$, be a partition of an integer $n\ge 0$. A \emph{Ferrers board} is a bottom-left justified arrangement of unit squares with $\lambda_i$ squares in each row $i=1,\dots,k$ (with rows numbered from the bottom up). A \emph{traversal} $T$ of a Ferrers board $F$ is a $(0,1)$-filling of the cells of $F$ with exactly one $1$ in each row and column. A subset of $1$s of $T$ forms a \emph{submatrix} of $F$ if all rows and columns of $F$ containing these $1$s intersect \emph{inside} $F$. We say that some $1$s of $T$ form an occurrence of a pattern $\sigma$ if the submatrix they form is equal to the \emph{permutation matrix} $M(\sigma)$ of $\sigma$ (i.e. $M(\sigma)$ has $1$s in positions $(i,\sigma(i))$ for $i\in[|\sigma|]$, with rows numbered from the bottom up and columns numbered left to right). If a traversal $T$ of a Ferrers board $F$ does not contain any submatrix equal to $M(\sigma)$, then we say that $T$ \emph{avoids} $\sigma$. We also denote the set of all traversals of $F$ that avoid $\sigma$ by $\Av_F(\sigma)$.

\begin{definition}
We say that patterns $\sigma_1$ and $\sigma_2$ are \emph{shape-Wilf-equivalent} if $|\Av_F(\sigma_1)|=|\Av_F(\sigma_2)|$ for any Ferrers board $F$.
\end{definition}

Note that the only boards $F$ of height $k$ which contain a traversal (call those boards \emph{traversable}) are boards such $\lambda_1=k$ (in other words, row 1 and column 1 have the same length) and $\lambda_i\ge k+1-i$ for $i=1,\dots,k$ (in other words, $F$ contains the staircase shape $(k,k-1,\dots,2,1)$).

Backelin et al.~\cite{BWX} proved the shape-Wilf equivalence of $\operatorname{id}_\ell=12\dots \ell$ and $r(\operatorname{id}_\ell)=\ell\dots 21$ for any $\ell\ge 1$, while Stankova and West~\cite{SW} proved that $231$ and $312$ are shape-Wilf equivalent.

Now we can extend $\st$-Wilf equivalence to Ferrers boards for all statistics $\st$ among $\Destop$, $\Desbot$, $\Asctop$, and $\Ascbot$.

Given a traversable Ferrers board $F$, let $\lambda_1>\lambda_2>\dots>\lambda_l$ be its distinct part sizes, and let $k_i$ be the multiplicity of $\lambda_i$ for each $i=1,\dots,l$, so that the shape of $F$ is $\lambda=(\lambda_1^{k_1},\lambda_2^{k_2},\dots,\lambda_l^{k_l})$ and $\sum_{i=1}^{l}{k_i}=k=\lambda_1$. Let $T$ be a traversal of $F$, and let $\tau\in S_k$ be a permutation such that $T=\{(i,\tau(i))\mid i\in[k]\}$. (In what follows, we will identify $T$ and $\tau$ and say that $\tau$ is a traversal of $F$.) 

For each $j\in[l]$, let $i_j=k_1+\dots+k_j$ (so $i_l=k$), and insert a separator $\vert$ after each $\tau(i_j)$ for $j=1,\dots,l$ (so the rightmost separator is at the end of $\tau$). Call the resulting string $\bar\tau_F$ the \emph{$F$-separation} of $\tau$. Now define the following statistics on $\bar\tau_F$:
\begin{itemize}

\item $\tau(i)$ is a \emph{descent top} of $\bar\tau_F$ if $\tau(i)\tau(i+1)$ is a block of $\bar\tau_F$ (so $\tau(i)$ is not immediately followed by a separator) and $\tau(i)>\tau(i+1)$,

\item $\tau(i)$ is a \emph{descent bottom} of $\bar\tau_F$ if $\tau(i-1)\tau(i)$ is a block of $\bar\tau_F$ (so $\tau(i)$ is not immediately preceded by a separator) and $\tau(i-1)>\tau(i)$, or $\vert\tau(i)$ is a block of $\bar\tau_F$ (so $\tau(i)$ is immediately preceded by a separator).

\item $\tau(i)$ is a \emph{ascent top} of $\bar\tau_F$ if $\tau(i-1)\tau(i)$ is a block of $\bar\tau_F$ (so $\tau(i)$ is not immediately preceded by a separator) and $\tau(i-1)<\tau(i)$,

\item $\tau(i)$ is a \emph{ascent bottom} of $\bar\tau_F$ if $\tau(i)\tau(i+1)$ is a block of $\bar\tau_F$  (so $\tau(i)$ is not immediately followed by a separator) and $\tau(i)<\tau(i+1)$, or $\tau(i)\vert$ is a block of $\bar\tau_F$ (so $\tau(i)$ is immediately followed by a separator).

\end{itemize}

Now define $\Destop(\bar\tau_F)$, $\Desbot(\bar\tau_F)$, $\Asctop(\bar\tau_F)$, and $\Ascbot(\bar\tau_F)$ to be the sets of descent tops, descent bottoms, ascent tops, and ascent bottoms of $\bar\tau_F$, respectively, and let $\Destop_F(\tau)=\Destop(\bar\tau_F)$ be the set of \emph{$F$-descent tops} of $\tau$, and similarly for the other statistics above.

\begin{example} \label{ex:f-sep}
Let $F$ be the Ferrers board for the partition $(6,6,5,5,3,3)$. Then $(i_1,i_2,i_3)=(2,4,6)$, so a traversal $\tau=465231$ of $F$ yields $\bar\tau_F=46\vert 52\vert 31\vert$, and thus
\[
\begin{split}
\Destop_F(\tau)&=\{3,5\},\\
\Desbot_F(\tau)&=\{1,2,3,5\},\\ 
\Asctop_F(\tau)&=\{6\},\\
\Ascbot_F(\tau)&=\{1,2,4,6\}.
\end{split} 
\]
\end{example}

Now, for a statistic $\st\in\{\Destop,\Desbot,\Asctop,\Ascbot\}$, we say that $\sigma_1$ and $\sigma_2$ are \emph{$\st$-shape-Wilf equivalent} if, for each traversable board $F$, there exists a bijection $\Theta:\Av_F(\sigma_1)\to\Av_F(\sigma_2)$ that preserves the statistic $\st_F$, i.e.\ $\st_F=\st_F\circ\,\Theta$.

\bigskip

Note that line 4 of Conjecture~\ref{conj:destop} claims that $231\oplus 1$ and $312\oplus 1$ are $\Destop$-Wilf equivalent. We can generalize this claim as follows.

\begin{conjecture} \label{conj:destop-231-312-shape}
Patterns $231\oplus\sigma$ and $312\oplus\sigma$ are $\Destop$-shape-Wilf equivalent for any permutation $\sigma$.
\end{conjecture}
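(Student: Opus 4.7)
The plan is to prove Conjecture~\ref{conj:destop-231-312-shape} by induction on $|\sigma|$, with base case $\sigma=\emptyset$ asserting that $231$ and $312$ themselves are $\Destop$-shape-Wilf equivalent, and the inductive step using a Backelin--West--Xin-style ``peeling'' of the top-right $\sigma$-block.

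For the base case, the natural approach is to lift Theorem~\ref{thm:231-312-destop-desbot} from rectangular boards to arbitrary traversable Ferrers boards. I would introduce an $F$-analogue of a descent matching: a pair $(T,B)$ together with additional compatibility constraints reflecting the fact that each $F$-separator forces an ascent-bottom status on the value to its left and constrains the value to its right. The goal is to show that for every such compatible $(T,B)$ there is exactly one traversal $\tau\in\Av_F(231)$ and exactly one $\tau'\in\Av_F(312)$ with $(\Destop_F,\Desbot_F)(\tau)=(\Destop_F,\Desbot_F)(\tau')=(T,B)$, so that $\tau\mapsto\tau'$ is a $(\Destop_F,\Desbot_F)$-preserving bijection. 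The construction adapts the greedy left-to-right algorithm from the proof of Theorem~\ref{thm:231-312-destop-desbot}, supplemented by the column-availability constraints of $F$ and by enforced restarts at each separator; the availability arguments require an $F$-version of Lemma~\ref{lem:destop-312} asserting that within each $F$-block of a $312$-avoider the non-descent-bottom values occur in left-to-right increasing order.

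For the inductive step, given $\tau\in\Av_F(231\oplus\sigma)$, identify a canonical occurrence of $\sigma$ in $\tau$ (for instance, the one whose sequence of row indices is lexicographically maximal), and let $F'\subseteq F$ be the sub-board consisting of all cells of $F$ lying strictly below and strictly to the left of every cell of this occurrence. The restriction $\tau|_{F'}$ is a traversal of $F'$ avoiding $231$, since otherwise the union of a $231$-instance in $F'$ with the chosen $\sigma$-occurrence would yield a $231\oplus\sigma$-instance in $\tau$. Apply the base-case bijection to $\tau|_{F'}$, producing a $312$-avoiding traversal of $F'$ with the same $F'$-descent tops; the result is a traversal of $F$ avoiding $312\oplus\sigma$, with an inverse defined symmetrically.

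The principal obstacle is reconciling two different block structures: the sub-board bijection preserves $\Destop_{F'}$, but the global claim requires preserving $\Destop_F$, and the two statistics need not agree on values inside $F'$ since $F'$-separators need not align with $F$-separators. To resolve this, the canonical $\sigma$-occurrence must be chosen so that $F'$-blocks are unions of $F$-blocks on $F'$'s rows, ensuring the two separator structures agree on $F'$'s values, and so that any value on the boundary between $F'$ and its complement has its descent-top status determined by the fixed $\sigma$-occurrence rather than by the sub-board bijection. A secondary difficulty is proving that this canonical $\sigma$-occurrence is invariant under the sub-board bijection, without which the inverse is ill-defined; I expect this to follow from a BWX-style ``core'' argument showing that the chosen $\sigma$-occurrence depends only on $(\Destop_F,\Desbot_F)(\tau)$ and on $\sigma$, not on the internal arrangement of $\tau|_{F'}$.
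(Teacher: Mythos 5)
This statement is a \emph{conjecture} in the paper: no proof is given, and the author reports only computational verification for $\sigma$ of size at most $2$ and avoiders of length at most $10$, so there is no proof of record to compare yours against. On its own terms, your proposal is a research plan rather than a proof, and the gaps are substantive. Even your base case --- that $231$ and $312$ are $\Destop$- (indeed $(\Destop,\Desbot)$-) shape-Wilf equivalent --- is itself only conjectured in the paper, in the remark following Conjecture~\ref{conj:destop-231-312-shape}; Theorem~\ref{thm:231-312-destop-desbot} is proved only for ordinary permutations, i.e.\ square boards, and Stankova--West give only unrefined shape-Wilf equivalence. Your plan to extend the greedy algorithm via ``$F$-compatible descent matchings'' is reasonable in outline, but the existence and uniqueness of the avoiding traversal for each such matching under the column-availability constraints of $F$ is precisely the hard content, and you assert it rather than establish it.

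The inductive step contains a concrete error beyond the difficulties you acknowledge. Defining $F'$ as the shadow of a single canonical occurrence of $\sigma$ does not yield the equivalence ``$\tau$ avoids $231\oplus\sigma$ iff $\tau|_{F'}$ avoids $231$'': a copy of $231\oplus\sigma$ in $\tau$ places its $231$ part in the shadow of whichever $\sigma$-occurrence it uses, which need not be contained in the shadow of your lexicographically chosen one. The Backelin--West--Xin argument takes $F'$ to be the union of the shadows of \emph{all} occurrences of $\sigma$ (automatically a Ferrers shape), and even then one must delete empty rows and columns of $F'$ to obtain a traversal and prove that $F'$ is unchanged when the filling inside it is replaced --- none of which your single-occurrence setup provides. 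More seriously for the statistic at hand, $\Destop_F$ of a value inside $F'$ depends on which value occupies the adjacent position of the full board and on where the $F$-separators fall, both of which the internal rearrangement can disturb; your proposed remedy (choose the occurrence so that $F'$-blocks are unions of $F$-blocks and interface values are pinned) is exactly the missing idea, and you give no argument that such a choice exists --- for the union-of-shadows $F'$ the block structures generally do not align. Until the base case is established and the statistic is shown to survive the peeling, the statement remains open.
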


Moreover, it appears that patterns $231$ and $312$ themselves are $(\Destop,\Desbot)$-shape-Wilf equivalent.

%

Similarly, note that the first and third patterns in line 5 of Conjecture~\ref{conj:destop} are $132\ominus 1$ and $213\ominus 1$. Taking their complements (which turns descent tops into ascent bottoms), we obtain the claim that $312\oplus 1$ and $231\oplus 1$ are $\Ascbot$-shape-Wilf equivalent. We can generalize this claim as follows.

\begin{conjecture} \label{conj:ascbot-231-312-shape}
Patterns $231\oplus\sigma$ and $312\oplus\sigma$ are $\Ascbot$-shape-Wilf equivalent for any permutation $\sigma\ne\emptyset$.
\end{conjecture}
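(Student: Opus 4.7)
The plan is to reduce the conjecture to a base $\Ascbot$-shape-Wilf equivalence between $231$ and $312$ on a restricted class of Ferrers boards, via a general outer-summand principle. The hypothesis $\sigma\ne\emptyset$ is used to guarantee that every traversal avoiding $231\oplus\sigma$ (or $312\oplus\sigma$) carries a top-right \emph{anchor} whose columns pin down enough structure to make $\Ascbot_F$ trackable under the bijection; this is presumably why the conjecture excludes the $\sigma=\emptyset$ case.

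For the outer-summand reduction, I would adapt the classical construction that promotes shape-Wilf equivalence of $\pi_1$ and $\pi_2$ to shape-Wilf equivalence of $\pi_1\oplus\rho$ and $\pi_2\oplus\rho$. Given $\tau\in\Av_F(231\oplus\sigma)$, one canonically identifies a \emph{$\sigma$-hat}: the $|\sigma|$ entries of $\tau$ realizing a specific lex-least copy of $\sigma$ occupying the topmost rows and rightmost columns of $F$ available. The remaining $1$s form a traversal of a smaller board $F'$ (obtained by deleting the hat's rows and columns) that must avoid the classical pattern $231$, since otherwise a $231$-copy in $F'$ combined with the $\sigma$-hat would realize $231\oplus\sigma$ in $\tau$. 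Applying a base bijection $\Theta_{F'}:\Av_{F'}(231)\to\Av_{F'}(312)$ and reinserting the hat yields an element of $\Av_F(312\oplus\sigma)$. Since the hat's entries and the separator-adjacent positions of $F$ lying in the hat's columns are left untouched, and $\Theta_{F'}$ is required to preserve $\Ascbot_{F'}$, the full statistic $\Ascbot_F$ is preserved.

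The main obstacle is constructing the base bijection $\Theta_{F'}$ so that it preserves $\Ascbot_{F'}$. The Stankova--West shape-Wilf bijection between $231$- and $312$-avoiding traversals operates by global swaps that typically disturb both the within-block ascent/descent structure of $\bar\tau_{F'}$ and the values sitting immediately before separators of $F'$, so it cannot be used off the shelf. I would therefore aim to design $\Theta_{F'}$ to act block-locally, performing only swaps that fix the top-row anchor in each block of $\bar\tau_{F'}$ and exploiting the fact that removing the $\sigma$-hat leaves $F'$ with a distinguished wide top row. A secondary technical issue is to verify that the canonical $\sigma$-hat is always well-defined and that $F'$ is indeed a Ferrers board---both standard but nontrivial facts inherited from the classical shape-Wilf theory, analogous to those needed in the Backelin--West--Xin proof that $12\dots\ell\sim_{sh}\ell\dots21$.
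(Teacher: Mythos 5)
This statement is a conjecture in the paper: the author offers no proof, only computational verification for $\sigma$ of size at most $2$ on avoiders of length at most $10$. Your proposal does not close that gap; it is a plan with two unproven (and in one case incorrectly stated) ingredients, so it does not constitute a proof.

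First, the outer-summand reduction as you describe it is not correct. If $\tau\in\Av_F(231\oplus\sigma)$ and you delete the rows and columns of one canonical copy of $\sigma$ (assuming such a copy even exists, which it need not), the remaining $1$s do not have to avoid $231$: a $231$-occurrence among them forces an occurrence of $231\oplus\sigma$ only when the $231$-copy lies entirely below and to the left of the $\sigma$-copy \emph{and} all the relevant rows and columns intersect inside $F$. The standard Babson--West/Backelin--West--Xin transfer from a pattern to its one-sided sum with $\rho$ is genuinely more delicate: one passes to the sub-board of cells that are dominated by \emph{some} copy of $\rho$ in the appropriate corner, the restricted filling is only a partial filling (not a traversal) of that sub-board, and extra work is needed to reduce to full traversals. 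None of that machinery is set up here, and it is not clear it can be made to carry a value statistic such as $\Ascbot_F$, since $\Ascbot_F$ depends on reading-order adjacencies and on which values sit immediately before the separators of $F$ --- data that the sub-board decomposition scrambles. Second, and as you acknowledge yourself, the base ingredient --- an $\Ascbot_{F'}$-preserving bijection $\Av_{F'}(231)\to\Av_{F'}(312)$ for arbitrary traversable Ferrers boards $F'$ --- is not constructed; the Stankova--West bijection does not preserve it, and the paper itself only remarks that the analogous refined shape-Wilf equivalence of $231$ and $312$ ``appears'' to hold. So the main obstacle you identify is precisely the open content of the problem, and deferring it means the argument proves nothing beyond a reduction that is itself not established.
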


Conjectures~\ref{conj:destop-231-312-shape} and~\ref{conj:ascbot-231-312-shape} have been verified for $\sigma$ of size $k\le 2$ on avoiders of length $n\le 10$.

\subsection{Recent progress} \label{subsec:recent}

There has been recent progress in proving parts of Conjecture~\ref{conj:destop} as well as Conjecture~\ref{conj:destop-desbot}. Zhou et al.~\cite{ZZY24} proved the following results (related to one another by the reversal and complement maps).

\begin{theorem}(Zhou et al.~\cite[Theorems 1.4, 1.5, 1.6]{ZZY24}) \label{thm:zzy}
\begin{itemize}
\item The patterns $3142$ and $3241$ are $(\Destop,\Desbot,\Asctop,\Peak,\LRmax)$-Wilf-equivalent.
\item The patterns $3142$ and $4132$ are $(\Destop,\Desbot,\Ascbot,\Val,\RLmin)$-Wilf-equivalent.
\item The patterns $2413$ and $1423$ are $(\Destop,\Asctop,\Ascbot,\Peak,\RLmax)$-Wilf-equivalent.
\end{itemize}
\end{theorem}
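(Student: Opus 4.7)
First, statement (2) follows from statement (1) by the reverse-complement symmetry of Section~\ref{sec:prelim}: since $3142^{rc}=3142$ and $3241^{rc}=4132$, and the involution $\sigma\mapsto\sigma^{rc}$ exchanges the quintuples $(\Destop,\Desbot,\Asctop,\Peak,\LRmax)$ and $(\Desbot,\Destop,\Ascbot,\Val,\RLmin)$, any statistic-preserving bijection witnessing (1) yields one witnessing (2) after conjugation with $rc$. Statement (3) cannot be obtained from the others this way (indeed $1423^{rc}=2314\ne 1423$), so it will need an independent argument of the same flavour.

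For statement (1), the plan is to build a recursive bijection $\Phi:\Av_n(3142)\to\Av_n(3241)$ by induction on $n$, driven by a canonical block decomposition aligned with the left-to-right maxima of the input. Since $\LRmax$ is one of the statistics to be preserved and each LR maximum must map to an LR maximum of equal position \emph{and} equal value, the bijection is forced to fix the LR max skeleton. The structural observation I would exploit is that in a $3142$-avoider (and likewise a $3241$-avoider), the sub-permutation lying strictly between two consecutive LR maxima at values $m_i<m_{i+1}$ is confined to values below $m_i$ and, by the respective avoidance, is itself forced to avoid a shorter derived pattern. The bijection $\Phi$ preserves the LR max skeleton and applies, within each inter-maximum block, the recursively constructed bijection between the corresponding smaller avoider sets (with values shifted by an affine map). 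Preservation of $\Destop$, $\Desbot$, and $\Asctop$ follows because each block's underlying value interval is preserved pointwise, and $\Peak$ is then forced by the ascent/descent pattern across block boundaries, which is pinned down by the LR max skeleton.

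For statement (3), the $rc$-invariance of $2413$ suggests using a decomposition that is itself symmetric under $rc$. The presence of $\RLmax$ in the statistic set points to a dual decomposition based on right-to-left maxima: the bijection $\Av_n(2413)\to\Av_n(1423)$ fixes the RL max skeleton and recurses on the blocks between consecutive RL maxima, where the permissible local patterns again reduce to avoidance of a shorter pattern on smaller intervals. The local statistics $\Destop$, $\Asctop$, $\Ascbot$, and $\Peak$ are then preserved by the same block-by-block argument as in (1).

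The main obstacle will be the \emph{simultaneous} preservation of all five statistics, and in particular the rigidity of the three value-valued ones. A $\Des$-preserving bijection (like Bloom's~\cite{Bloom14}) need only match descent positions, but here one must match the actual numerical labels at each descent top, descent bottom, and ascent top. This forces the bijection to preserve the value interval of each recursive block, a much stiffer condition than merely preserving block sizes. Verifying that the recursion produces matching values at every block boundary — so that ascents and descents across these boundaries carry the same tops and bottoms in both permutations — is the delicate point, and is presumably where Zhou, Zang, and Yan~\cite{ZZY24} introduce the specific bookkeeping that makes the construction go through.
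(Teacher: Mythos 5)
This theorem is not proved in the paper at all: it is quoted verbatim from Zhou, Zang, and Yan~\cite{ZZY24}, and the only argument the paper supplies in its vicinity is the symmetry computation in the proof of Corollary~\ref{cor:zzy}. Measured against that, your proposal has one correct observation, one missed symmetry, and one genuine gap. The correct part is the reduction of item (2) to item (1): indeed $3142^{rc}=3142$, $3241^{rc}=4132$, and $rc$ carries $(\Destop,\Desbot,\Asctop,\Peak,\LRmax)$ to $(\Desbot,\Destop,\Ascbot,\Val,\RLmin)$ up to complementation of values. The missed symmetry is that item (3) reduces to item (1) in exactly the same way, just using reversal alone rather than reverse-complement: $3142^{r}=2413$ and $3241^{r}=1423$, and reversal sends $(\Destop,\Desbot,\Asctop,\Peak,\LRmax)$ to $(\Asctop,\Ascbot,\Destop,\Peak,\RLmax)$, which is precisely the quintuple in item (3). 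The paper says as much when it remarks that the three items are ``related to one another by the reversal and complement maps,'' so your claim that (3) ``will need an independent argument'' is wrong, though harmlessly so.

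The genuine gap is item (1), where your text is a plan rather than a proof, and the plan as stated does not work. First, preserving the statistic $\LRmax$ only fixes the \emph{set of values} of the left-to-right maxima, not their positions, so the ``LR max skeleton'' is not rigid in the way you assert. More seriously, the inter-maximum blocks of a $3142$-avoider are not independent smaller avoidance problems: an occurrence of $3142$ (or $3241$) can take its ``$3$'' and ``$1$'' from one gap, its ``$4$'' from a later left-to-right maximum, and its ``$2$'' from a still later gap, so the avoidance condition couples the blocks and no reduction to ``a shorter derived pattern'' on each block is available. This is consistent with the fact that even the unrefined Wilf equivalence of $3142$, $3241$, $4132$ is a hard theorem; the known refinements (Bloom~\cite{Bloom14} for $\Des$, and \cite{ZZY24} for the quintuple here) go through bijections with labeled lattice-path or arc-diagram objects, not a recursive block decomposition. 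You concede in your last paragraph that the crucial bookkeeping is ``presumably'' supplied by \cite{ZZY24}; that concession is the entire content of the theorem, so what you have is a reduction of three statements to one, together with an unproved (and, as sketched, unworkable) strategy for that one.
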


Here $\Peak(\sigma)=\Destop(\sigma)\cap\Asctop(\sigma)$ is the set of \emph{peak} values of $\sigma$ , and $\Val(\sigma)=\Desbot(\sigma)\cap\Ascbot(\sigma)$ is the set of \emph{valley} values of $\sigma$ (also known as \emph{pinnacles} and \emph{vales} of $\sigma$, respectively). For completeness, we add the following easy corollary. 

\begin{corollary} \label{cor:zzy}
The patterns $2413$ and $2314$ are $(\Desbot,\Asctop,\Ascbot,\Val,\LRmin)$-Wilf-equivalent.
\end{corollary}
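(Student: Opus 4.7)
The plan is to obtain Corollary~\ref{cor:zzy} directly from the third item of Theorem~\ref{thm:zzy} by conjugating with the reverse-complement map $rc:\sigma\mapsto\sigma^{rc}$, $\sigma^{rc}(i)=n+1-\sigma(n+1-i)$. First I would compute the rc-images of the two patterns in that item: a quick check gives $(2413)^{rc}=2413$ and $(1423)^{rc}=2314$. Since $rc$ is an involution of $S_n$ sending $\Av_n(\pi)$ to $\Av_n(\pi^{rc})$, if $\Theta:\Av_n(2413)\to\Av_n(1423)$ is the bijection supplied by Theorem~\ref{thm:zzy}, then $\Theta':\Av_n(2413)\to\Av_n(2314)$ defined by $\Theta'(\sigma)=\Theta(\sigma^{rc})^{rc}$ is a well-defined bijection.

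Next I would track how $rc$ acts on each of the five statistics named in that item. Section~\ref{sec:prelim} already records $i\in\Destop(\sigma)\iff n+1-i\in\Desbot(\sigma^{rc})$ and the symmetric statement with $\Destop$ and $\Desbot$ swapped; the identical argument interchanges $\Asctop$ and $\Ascbot$. Since $\Peak(\sigma)=\Destop(\sigma)\cap\Asctop(\sigma)$ and $\Val(\sigma)=\Desbot(\sigma)\cap\Ascbot(\sigma)$, it follows that $rc$ interchanges $\Peak$ and $\Val$. A parallel check for the position-based statistics shows that $\sigma(i)$ is a right-to-left maximum of $\sigma$ if and only if $\sigma^{rc}(n+1-i)$ is a left-to-right minimum of $\sigma^{rc}$, so $rc$ also interchanges $\RLmax$ and $\LRmin$.

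Putting these ingredients together, preservation of $(\Destop,\Asctop,\Ascbot,\Peak,\RLmax)$ by $\Theta$ translates, under the conjugation $\Theta'(\sigma)=\Theta(\sigma^{rc})^{rc}$ and with the standard relabeling $i\mapsto n+1-i$ used throughout Section~\ref{sec:prelim}, into preservation of $(\Desbot,\Ascbot,\Asctop,\Val,\LRmin)$ by $\Theta'$. That is exactly the statistic-tuple required by the corollary. There is no real obstacle here; the only bookkeeping is verifying the two pattern identities $(2413)^{rc}=2413$ and $(1423)^{rc}=2314$ and carefully matching each statistic to its image under the rc-swap.
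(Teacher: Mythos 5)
Your proof is correct and is in essence the same one-line symmetry argument the paper gives: the paper obtains the corollary by applying plain reversal to the second item of Theorem~\ref{thm:zzy} (the patterns $3142$ and $4132$, with $(\Destop,\Desbot,\Ascbot,\Val,\RLmin)\mapsto(\Asctop,\Ascbot,\Desbot,\Val,\LRmin)$ under reversal), whereas you conjugate the third item ($2413$ and $1423$) by reverse-complement; both routes land on the same pair of patterns and the same statistic tuple. Your bookkeeping --- the identities $(2413)^{rc}=2413$ and $(1423)^{rc}=2314$ and the swaps $\Destop\leftrightarrow\Desbot$, $\Asctop\leftrightarrow\Ascbot$, $\Peak\mapsto\Val$, $\RLmax\mapsto\LRmin$ --- checks out, and one can add that the two value relabelings $i\mapsto n+1-i$ cancel in $\Theta'(\sigma)=\Theta(\sigma^{rc})^{rc}$, so $\Theta'$ preserves the five statistics literally as sets with no residual relabeling.
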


\begin{proof}
Permutations avoiding $2413$ and $2314$ are exactly the reversals of those avoiding $3142$ and $4132$. Applying the reversal yields the map
\[
(\Destop,\Desbot,\Ascbot,\Val,\RLmin) \mapsto (\Asctop,\Ascbot,\Desbot,\Val,\LRmin)
\]
on permutation statistics.
\end{proof}

\acknowledgments \label{sec:ack}
The author is grateful to the anonymous referees, whose detailed remarks and suggestions have greatly improved the presentation of the material.

\end{document}